\pgfplotsset{compat=newest}
\algnewcommand\INPUT{\item[\textbf{Input:}]}
\algnewcommand\OUTPUT{\item[\textbf{Output:}]}
\newtheorem{Theorem}{Theorem}[part]
\newtheorem{Definition}{Definition}[part]
\newtheorem{Proposition}{Proposition}[part]
\newtheorem{Lemma}{Lemma}[part]
\newtheorem{Remark}{Remark}[part]
\makeatletter \@addtoreset{equation}{section}
\newcommand{\fproof}{\hfill $\square$ \bigskip}
 \title{Optimal Impulse Control for Cyber Risk Management}
 \author{Caroline {\sc Hillairet}\footnote{ENSAE IP Paris, Center for Research in Economics and Statistics, France. \texttt{caroline.hillairet@ensae.fr}}, \, Thibaut {\sc Mastrolia}\footnote{UC Berkeley, Department of Industrial engineering and Operations Research, USA. \texttt{mastrolia@berkeley.edu}} \, and\,  Wissal {\sc Sabbagh}\footnote{Laboratoire Manceau de Math\'ematiques, Institut du Risque et de l'Assurance, Le Mans Universit\'e, France. \texttt{wissal.sabbagh@univ-lemans.fr }.}}
\begin{document}

\maketitle

\begin{abstract} 
We explore an optimal impulse control problem wherein an electronic device owner strategically calibrates protection levels against cyber attacks. Utilizing epidemiological compartment models, we qualitatively characterize the dynamics of cyber attacks within the network. We determine the optimal protective measures against effective hacking by formulating and solving a stochastic control problem with optimal switching. We demonstrate that the value function for the cluster owner constitutes a viscosity solution to a system of coupled variational inequalities associated with a fully coupled reflected backward stochastic differential equation (BSDE). Furthermore, we devise a comprehensive algorithm alongside a verification procedure to ascertain the optimal timing for network protection across various cyber attack scenarios. Our findings are illustrated through numerical approximations employing deep Galerkin methods for partial differential equations (PDEs). We visualize the optimal protection strategies  and efficiency in the context of two distinct attack scenarios: (1) a constant cyber attack, (2) an exogenous cyber attack strategy modeled with a Poisson process.
\vspace{3mm}

\noindent{\bf Key words: } cyber-risk modeling, optimal switching, impulse control, PDE with obstacle, deep Galerkin methods for variational inequalities. 
\vspace{3mm}

\noindent{\bf AMS 2020 subject classifications: 93E20, 	60H30, 60H35, 49L12} 
 
\end{abstract}

\section{Introduction}

With the widespread deployment of connected systems and the ever-increasing digitalization of our economy and society, the risk of cyber failures is omnipresent for individuals, businesses, and institutions across both public and private sectors. These cyber failures are diverse and complex, including incidents such as hacking, ransomware attacks, and DDoS attacks, and they carry varied consequences like data corruption, data loss, and business disruptions that can significantly impact supply chains.

Cyber risks can lead to economic failures through massive, large-scale attacks affecting numerous victims or through more targeted cyber events that weaken networks of industrial interdependencies. This threat has been amplified by recent health and geopolitical crises. For instance, the COVID-19 pandemic created additional opportunities for cybercriminals, resulting in a surge of attacks such as ransomware, as highlighted in the 2020 activity report by the National Agency for the Security of Information Systems (ANSSI) \cite{ANSSI}. More recently, the war in Ukraine has demonstrated the power of cyberattacks as instruments of warfare.
The costs of cyber-risk are escalating rapidly. Damages caused by cybercrime, estimated  by \cite{CEA} at \$1 trillion in 2021 (equivalent to 1\% of global GDP), could soar to \$10 trillion by 2025, according to several sources, such as the World Economic Forum and   Cybersecurity Ventures.\footnote{https://cybersecurityventures.com/official-cybercrime-report-2025/}.

The evolving nature of cyber-risk, its potential to become systemic, and its behavioral aspects make it challenging to establish the most effective cyber-risk management policies. Cyber-risk is inherently a human risk, necessitating a deeper understanding of the behaviors and motivations of the various actors involved. The COVID-19 crisis highlights one of the most concerning characteristics of cyber-risk: the adaptability and opportunism of hackers, as evidenced by the surge in malicious websites and fraudulent emails exploiting the pandemic.
To enhance the resilience of the economy against this growing threat, it is crucial that users adopt and consistently implement robust cyber-protection measures.
%Cyber-protection/risk prevention, 
%in particular by encouraging the population to adopt a minimum of cyber-hygiene. 
Similar to epidemiology, the approach to cyber-risk management is twofold: first, to adopt the right measures to avoid becoming a victim of the disease, and second, to prevent spreading the risk to others. Raising awareness and implementing effective protection measures are essential, even for the smallest companies. If these smaller entities are compromised, they can negatively impact larger organizations, either through a domino effect in the supply chain (as seen during the COVID-19 pandemic with the shortage of electronic components) or through a Trojan horse effect (as exemplified by the SolarWinds attack, where a breach in a supplier's security enabled the infiltration of large companies and government departments).
Prevention policy is a hot topic issue, underscored by the enactment of Europe’s Digital Operational Resilience Act (DORA), which took effect in January 2023, with full implementation slated for January 2025 or the Cyber Trust Mark in the United States. DORA requires financial entities to demonstrate the ability to withstand, respond to, and recover from significant operational disruptions related to information and communication technologies (ICT). Notably, the regulation mandates the creation of a monitoring mechanism for the service providers these companies depend on, ensuring a more resilient and secure digital ecosystem. The U.S. Cyber Trust Mark program has been introduced by the Federal Communications Commission (FCC) in July 2023. It is designed to help consumers easily identify smart devices that meet certain cybersecurity standards, ensuring that these devices are more secure against potential cyber threats.\\

Recent literature has led to significant advances in the quantification of cyber risk, particularly in relation to insurance coverage. Notable contributions in this area include works such as \cite{eling2017data,eling2020capital}, or \cite{awiszus2023modeling} among others.
 \cite{farkas2021cyber} investigate severe and extreme cyber claims using a combination of Generalized Pareto modeling and regression tree approaches.
The accumulation and contagion characteristics of cyber events, as highlighted by \cite{zeller2023accumulation}, can be modeled using epidemiological network models adapted to the specific nature of cyber risk, see \cite{nguyen2017modelling},  \cite{hillairet2021propagation,hillairet2022cyber}. Alternatively, self-excited counting models, such as those proposed by  \cite{Multi-Variate_HawkesBessy}, or marked-point processes as in \cite{zeller2023risk} may also be considered. 
These models can be  used for the pricing of cyber-insurance contracts, as in \cite{fahrenwaldt2018pricing} or \cite{hillairet2023expansion}. In this paper, we utilize several of these models, with a particular emphasis on epidemiological models, to determine optimal switching protection policies in the context of cybersecurity. \\

The economics of cybersecurity was formally established in a theoretical framework by Gordon and Loeb in their seminal 2002 paper \cite{gordon2002economics}. They proposed a model for determining the optimal allocation of a limited budget across different information sets, which are characterized by their vulnerability and potential loss in the event of a successful cyber attack. This influential article has inspired numerous subsequent studies that have refined and expanded upon its conclusions by incorporating more specific and concrete hypotheses, as exemplified by \cite{skeoch2022expanding}. In particular, \cite{gordon2020integrating} adapted this model to determine the optimal level of security within the framework of the NIST Cybersecurity Framework. The article \cite{mazzoccoli2020robustness} examines optimal investment decisions in the context of mixed insurance and investment strategies for managing cyber risk. Additionally, \cite{kolokoltsov2016mean} analyzes the response of defense systems to cyber-attacks as a stochastic game involving a large number of interacting agents.\\

This study proposes to address the challenge faced by cluster owners in balancing the costs of protecting their computer networks against cyber-attacks. The study focuses on optimizing the decision-making process regarding whether to regularly update or purchase security software. The issue at hand involves a trade-off: inadequate protection can result in substantial financial losses due to cyber incidents, which affect both the cluster owner and its customers. Conversely, implementing active protection measures can be very costly.

This work emphasizes the need for dynamic and adaptive protection strategies due to the rapid evolution of cyber threats and the behavior of both hackers and users. To address this challenge, the paper proposes a method for defining optimal protection policies that are implemented continuously over time and involve discrete sets of strategic choices. These policies are determined through the optimization of performance-cost criteria, using advanced stochastic impulse control techniques and regime switching. This approach provides a structured framework for achieving an effective balance between the costs of cyber protection and the risks of potential cyber incidents.\\

The theory of stochastic impulse control has been developed in the 70s' and early 80s' \cite{bensoussan1975nouvelles,lepeltier1979techniques,kushner1976approximations,Tang1993} by considering verification theorem and quasi-variational-inequality by using control tools developed by Bensoussan and Lions. We refer to \cite{bensoussan2011applications} for a review of the litterature on the topic or to \cite{belak2017general} for a general formulation of the problem. It has then been extended to stochastic diffusion models and mixed controlled-switching problems in for example \cite{guo2001explicit,ly2007explicit,bayraktar2010one,bayraktar2016robust,kharroubi2016optimal,oksendal2019stochastic} and linked to BSDE theory in a non-Markovian setting in \cite{chassagneux2011note,elie2014bsde}. This kind of problems has been applied in diverse field of economics: storage systems \cite{harrison1983impulse,harrison1983instantaneous}; decision-making theory with entry and exit decisions \cite{dixit1989entry,zervos2003problem,guo2005optimal}; energy storage \cite{carmona2010valuation} and energy production \cite{bayraktar2023neural}; control of portfolio in which an investor optimally intervenes in order to rebalance his portfolio
and consume a nonnegative amount of money at random chosen times  in \cite{eastham1988optimal}; optimal investment \cite{savku2022stochastic}; price formation in limit order book \cite{gayduk2018endogenous}; operational flexibility of energy assets \cite{carmona2008pricing}; commodity market in \cite{alizadeh2008markov,chen2012regime,ludkovski2011stochastic} or more recently \cite{aid2021impulse} in which the price of 
the commodity is influenced by firms' competition.\\

This study explores a stochastic epidemiological SIRS model that switches between different dynamics based on two factors: the control exerted by cluster owners (endogenous switching control) and hacking activities (exogenous and uncertain hazards), which are modeled through various attack scenarios. This research addresses a switching problem within a stochastic epidemiological framework, specifically focusing on cyber risk management in the presence of external attacks and random environment. Utilizing It\^o's calculus and the verification method, we derive a characterization of the cluster owner's value function as a viscosity solution to a system of quasi-variational inequalities, based on dynamic programming principles. We also present a practical pseudo-algorithm and a verification theorem with explicit switching conditions. Our approach includes a detailed method and a pseudo-algorithm to facilitate switching between protection policies under different attack scenarios.\\

Finally, we develop numerical approximations to simulate the
optimal protection strategies of the cluster's owner based on the use of Deep Galerkin Method \cite{DGM}. 
The Deep Galerkin Method (DGM) is a numerical technique that leverages deep learning to solve partial differential equations (PDEs). It builds on the classic Galerkin method but uses deep neural networks to approximate the solution to the PDE, making it particularly well-suited for high-dimensional problems where traditional methods struggle due to the curse of dimensionality. This method is also known for 
its capacity to handle complex domains which makes it a powerful tool for various applications, particularly in fields such as finance, physics, and engineering. \\

The paper is organized as follows.  Section 2 presents the epidemiological SIRS dynamics  used to model  cyber-attacks contagion  through the cluster, and the impact of a protection campaign. Section 3 states the optimal impulse control problem of a cluster owner facing  exogenous cyber-attacks.  It is solved using dynamic programming principle. A detailed numerical study is provided in Section 4. 

%However, this simple model does not reflect the interconnection of different financial systems and insurance companies. Natural extensions of this work will be to consider a large network of interconnected computers, as a mean- field game between various cluster owners against cyber-attacks, see the previous study [KB16]. Another extension will be to consider multi-agent game-theoretic simulations based on deep reinforcement learning for cyber defense strategies. attacks, inspired by [FX17, NR19]. These model-free approaches will allow us to design a new cyber-defense policy, in an uncertain environment both in terms of the interconnections of the different computers and hackers attacks.

%A ENLEVER ou METTRE AILLEURS Control of the hacker and the cluster owner are binary : there is either attack or not, defense or not. When there is attack, the intensity of attack is fixed at $\nu>0$ and when there is defense, the intensity of defense implementation is $\kappa >0$. Control of  the hacker is the process  $(a_t)$ that takes value in $\{0, 1\}$. If $a=0$, there is no attack,  and $a=1$ means an attack with fixed  intensity equal to $\nu$. Similarly, the control (protection) of the cluster owner is the process $(p_t)$ that takes value in $\{0, 1 \}$. If $p=0$, there is no protection,  and $p=1$ means an intensity of antivirus implementation  equal to $\kappa$.

\section{Computer cluster modeling}
Throughout the paper, we consider a filtered probability space $(\Omega,\mathcal F,\mathbb{F},\mathbb P)$ endowed with a one-dimensional Brownian motion denoted by $W$. The Brownian motion is viewed as an uncertainty to determine precisely the transmission rate of the virus inside the computers' cluster. 

\paragraph{Contagion, protection and hacking.}
The computers or electronic devices  in the cluster can be in three different states, defined below:
 
\begin{itemize}
\item The class of \textbf{Susceptible (S)}:  $S_t$  denotes the proportion  at time $t$ of non-sufficiently protected and  not-yet-infected computers, thus susceptible to be attacked. 
\item The classes of \textbf{Infected (I)}:  $I_t$ denotes the proportion  at time $t$  of infected computers which in turn can contaminate other devices. % with respectively no antivirus program, or antivirus software.  
\item The classes of \textbf{Removed (R)}:  $R_t$ denotes the proportion  at time $t$ of computers that are recovered after infection or protected by the antivirus software, and thus can not be infected anymore.  If one consider a given cyber-attack, this protection can  be  effective forever, thus leading to a SIR model. Alternatively if one consider different types of cyber-infection, the removed state is transient, leading to a SIRS model.
\end{itemize}

 The process $(S_t,I_t,R_t)_{t\geq 0}$ denote the proportions of computers in the corresponding classes with respect to the total number of computers. At each time $t$, the system has to satisfy $S_t+I_t+R_t=1$.\vspace{0.3em}

We assume that the hacker's strategy, denoted by $(a_t)_t\geq 0$, is a binary variable taking either the value $a_t=1$ if the hacker attacks the cluster or $a_t=0$ if the hacking  is inactive. When there is an attack, the intensity of attack is fixed at $\nu>0$. The response of  the cluster owner's to protect its network  is also a binary control variable denoted by $(p_t)_{t\geq 0}$ such that either he develops a dedicated protection to this attack, that is $p_t=1$ or he remains with the benchmark level of protection, that is $p_t=0$. The intensity of defense implementation is $\kappa >0$. Therefore the strategy of the cluster owner (respectively hacker) is equivalently defined by the switching times from  activating dedicated protection to  stopping it (respectively from launching an attack to stopping it). \vspace{0.5em}

The evolution of the system is the following:

\begin{itemize}
\item Computers in the class (S) can stay in the class (S) or can pass to the class (I) with fixed rate $ a \nu$ under Hacker's action $a$, or by contagion  with all infected computers with parameter $\beta$.
\item Computers in the class (S) can pass to the class (R) under the action of the cluster owner $p$ by downloading the antivirus software with a proportion $ p \kappa$ of computers in the class (S).
%\item Computers in the class $(P)$ can stay in the class $(P)$ or can pass to the class $(I)$ with fixed rate $\mu>0$ under Hacker's action.
\item Computers in the class (I) are replaced with rate $\gamma>0$ to pass to the class (R).
\item Computers in the class (R) can pass to the class (S)  with rate $\rho \geq 0$  as the protection measure  becomes obsolete.
\end{itemize}
Note that if one want to model a given attack on a short horizon, $\rho$ could be taken as zero.
The evolution of the system under protection, hacking and contagion is summarized in the following graph.

\begin{figure}[!ht]
    \centering
\begin{tikzpicture}
\node [rectangle, draw, rounded corners=4pt, fill=blue!20, text centered] (S) at (-2,0) {\bf Susceptible};
\node [rectangle, draw, rounded corners=4pt, fill=red!20, text centered] at (3,0) (I) {\bf Infected};
\node [rectangle, draw, rounded corners=4pt, fill=green!20, text centered] (P) at (0,-5) {\bf Removed}; 
\draw[->, draw=black, thick] (S) edge [ bend left]  node[ midway] { $\underset{(\mbox{contagion})}{\beta}$}(I);
\draw[->, draw=black, thick] (S) edge  [ bend right]  node[midway] {$\underset{(\mbox{hacking})}{a\nu}$}(I);
\draw[->, draw=black, dashed, thick] (I) edge[bend left]  node[midway, right] { $\; \gamma$}(P);
\draw[->, draw=black, thick] (P) edge  [ bend right]  node[midway,left] {$\rho$}(S);
\draw[->, draw=black, dashed, thick]  (S) edge[bend right]  node[midway, left] {$\underset {(\mbox{protection})}{p \kappa}$}(P);
\end{tikzpicture}
    \caption{Cluster evolution}
    \label{fig:SIS_model}
\end{figure}

We assume that the infection rate, rather than being constant, is subject to
random shocks which are modeled by a Brownian motion $W$ as in \cite[Equation (5)]{lesniewski2020epidemic}.
Hence, the dynamics of the SIRS system  evolves as
\begin{equation}\label{SIRsystem}
\begin{cases}
dS_t=\big(\rho R_t- S_t\left( a_t \nu+ I_t \beta +  p_t \kappa \right) \big)dt  - \sigma I_t S_t dW_t.\\
dI_t= a_t \nu S_tdt + \beta S_t  I_tdt - I_t \gamma dt + \sigma I_tS_t dW_t\\
dR_t=  p_t \kappa S_t dt  + \gamma I_t dt-\rho R_t dt \\
\end{cases},
\end{equation}
where $a$ and $p$ are switching %$\mathbb F-$adapted 
processes taking values into $\{0,1\}$ and specified hereafter.

\paragraph{Hacker's strategy.} 

The strategy $a$ of the hacker  is defined by   $\tilde \alpha:= (a_0, (\tilde\tau_n)_{n\geq 0})$  where $a_0 \in \{ 0, 1\}$ is the initial state  and  $(\tilde\tau_n)_{n\geq 0}$ are  the switching-times of the attack level,  with $\tilde\tau_0:=0$. The sequence $(\tilde\tau_n)_{n\geq 1}$ is an increasing sequence of  random times such that $\tilde\tau_n\longrightarrow +\infty$ when $n$ goes to $+\infty$. 
Starting with the initial state  $a_0 \in \{ 0, 1\}$, then  the state of attack at time $t$ is
 $$a_t =  \sum_{n\geq 0}  \mathbf 1_{\tilde{\tau}_{2n+1-a_0}  \leq t< {\tilde{\tau}_{2n+2-a_0}}}$$ and the intensity at time $t$  of the attack of the hacker is equal to $\nu a_t$.
 The random times $(\tilde\tau_n)_{n\geq 0}$ are assumed exogenous random times  independent to the filtration $\mathbb F$.

\paragraph{Cluster owner's strategy.} 
The  initial state  of protection $p_0 \in \{ 0, 1\}$ being given, the strategy $p$ of the cluster owner is then characterized  by  the sequence $\alpha$ of the switching-times  of the protection level $\alpha:=(\tau_n)_{n\geq 0}$ with $\tau_0:=0$. The cluster owner observes the current state of the system $(S_t,I_t,R_t)$ and we assume that he has set up a monitoring system to identify the current state of attack $a_t$, while not being able to anticipate the strategy of the hacker. More precisely, the cluster owner observes the current attack level but does not anticipate potential random changes in the attack, since he typically has no information about the hackers’ behavior. He therefore acts as if the attack level remains constant, without anticipating future changes. When the attack level changes, the cluster owner observes the shift and reacts by adapting his strategy to the new environment induced by the attack.%, that he still considers as constant.
 In other words, the cluster owner is subjected to random switch of the environment that he undergoes without being able to anticipate it. On each random time interval $[\tilde{\tau}_n, \tilde{\tau}_{n+1}[ $ characterized by the constant attack level  $a_{\tilde{\tau}_n} = 0$ or $1$,  the cluster owner's strategy $p_t$ depends on  this attack level $a_{\tilde{\tau}_n}$: $p_t =\mathfrak p_t(a_{\tilde{\tau}_n})$ where  $\mathfrak p_t(0)$ and $\mathfrak p_t(1)$ are $\mathbb F$-adapted processes.
%We  assume that the cluster owner has set up a monitoring system to identify the occurrence of attacks, therefore he observes the current state of the system $(S_t,I_t,R_t)$ as well as the random times $(\tilde{\tau}_n)$ when they occur. His available information is represented by $\mathbb G$ the progressive enlargement of the filtration $\mathbb F$ with respect to the successive random-times $(\tilde{\tau}_n)_n$. If the $\tilde{\tau}_n$ are $\mathbb F$-stopping times, $\mathbb G$ coincides with $\mathbb F$. If  they are independent exogenous random times, then the immersion property,  also referred to  $\mathcal H$-hypothesis, holds and $\mathbb F$-martingales remain $\mathbb G$-martingales.  Extension to random-times satisfying  the density hypothesis could be done  to the cost of a change of probability measure that coincides to $\mathbb P$ on $\mathcal F_\infty$ and under which $\mathbb F$ is independent to the $(\tilde{\tau}_n)_n$, see e.g. \cite{grorud1998insider} or \cite{aksamit2017enlargement}.
In terms of switching  times, the  cluster owner's strategy consists in the sequence $(\tau_n)_{n\geq 1}$ of increasing   random times, depending on the random environment of attack,  such that $\tau_n\longrightarrow +\infty$ when $n$ goes to $+\infty$.
Starting with the initial state $p_0 \in \{ 0, 1\}$, then  the state of protection at time $t$ is
 $$p_t =  \sum_{n\geq 0}  \mathbf 1_{\tau_{2n+1-p_0}  \leq t< \tau_{2n+2-p_0}}$$ and the intensity at time $t$  of the cluster owner defense is equal to $\kappa p_t$.

\vspace{0.5em}

We denote by $\mathcal{A}^p$ %$\mathcal{A}^p(\tilde \alpha )$ 
the set of admissible switching control of the cluster owner $\alpha:=(\tau_n)_{n\geq 0}$ satisfying $\mathbb E[\underset{{n\geq 1}}{\sum} e^{-\delta \tau_n }]<+\infty $, for a given strategy $\tilde \alpha = (\tilde \tau_n)_{n\geq 0}$ of the hacker. The hacker's strategy $\tilde \alpha$ determines the random environment faced by the cluster owner, who  is unable  to anticipate it. For a protection strategy $\alpha  \in  \mathcal{A}^p$, in a given environment of attacks $\tilde \alpha$, the  dynamics of the system is given by
\begin{equation}\label{systdynamics}\begin{cases}
S_t^{\alpha,\tilde \alpha}=s_0+\displaystyle\int_0^t    \rho R^{\alpha,\tilde \alpha}_s ds  -   \displaystyle\int_0^t    S^{\alpha,\tilde \alpha}_s   I^{\alpha,\tilde \alpha}_s\  \left(  \beta  ds  + \sigma   d W_s\right)
\\
\hspace{3em} -    \underset{\tau_n\leq t}{\sum}\displaystyle\int_{\tau_n}^{\tau_{n+1}\wedge t} S^{\alpha,\tilde \alpha}_s  \kappa p_s  ds -    \underset{\tilde\tau_n\leq t}{\sum}\displaystyle\int_{\tilde \tau_n}^{\tilde \tau_{n+1}\wedge t}  S_t^{\alpha,\tilde \alpha}   \nu  a_s  ds\\
I_t^{\alpha,\tilde \alpha}= i_0 +\displaystyle \int_0^t      I^{\alpha,\tilde \alpha}_s  \left(  ( \beta S^{\alpha,\tilde \alpha}_s  - \gamma) ds  + \sigma  S^{\alpha,\tilde \alpha}_s  d W_s\right)
+    \underset{\tilde\tau_n\leq t}{\sum}\displaystyle\int_{\tilde \tau_n}^{\tilde \tau_{n+1}\wedge t}  S_t^{\alpha,\tilde \alpha}   \nu  a_s  ds \\
R_t^{\alpha,\tilde \alpha}= r_0 +  \displaystyle \int_0^t      (I^{\alpha,\tilde \alpha}_s   \gamma -\rho R^{\alpha,\tilde \alpha}_s)  ds  + \underset{\tau_n\leq t}{\sum}\displaystyle\int_{\tau_n}^{\tau_{n+1}\wedge t}  \kappa p_s S^{\alpha,\tilde \alpha}_s ds,\\

S_0=s_0,\; I_0=i_0,\; R_0=r_0,
\end{cases}
\end{equation}
where $s_0+i_0+r_0=1$ and $(s_0,i_0,r_0)\in [0,1]^3$.
\vspace{0.5em}

\section{Impulse control, switching and cluster owner's optimization}
%In this section,  an exogenous strategy $\tilde \alpha$.
 Throughout the paper, the environment  of  attacks $\tilde \alpha$ is exogenous  and each change times $\tilde \tau_n$  in the attack level is revealed to the cluster owner only when it occurs. We deal with the optimal %response
strategy for  the cluster owner. 
More precisely, the cluster owner has to solve  a two regime switching controlled SIRS system, by choosing an admissible switching control  $\alpha=(\tau_n)_{n\geq 0} \in \mathcal A^p$  that optimizes the following criteria  with initial state $(s_0, i_0)$  and initial regime $p_0$ for the cluster owner 
\begin{equation}\label{pb:optprotection} V(s_0,i_0;p_0)=\inf_{\alpha \in \mathcal A^p} \mathbb E\left[\int_0^{+\infty}e^{-\delta t} (c_I I^{\alpha,\tilde \alpha}_t + f(S^{\alpha,\tilde \alpha}_t,p_t))dt +\sum_{n\geq 1} e^{-\delta \tau_n }   g_{p_{\tau_{n-1}}, \,p_{\tau_n}}(S^{\alpha,\tilde \alpha}_{ \tau_n },I^{\alpha,\tilde \alpha}_{ \tau_n }) \right]\end{equation}
where the cost of the vaccination is %$f(t,s,\kappa)=c_V s\sum_{j\geq 0}  p  \mathbf 1_{\tau_{2j+1}\leq t< \tau_{2j+2}}$ 
$f(s,p)=c_V \kappa \, s  \, p $, with $c_V$ the marginal cost of the vaccination, $c_I$ the marginal cost of the infected. For $p\in\{0,1\}$ the switching cost $g_{p,\bar p}:[0,1]^2\rightarrow\mathbb R$ is a positive continuous function. We assume that
there exists a constant $c_g>0$ such that for any $(s,i)\in [0,1]^2$,
$$\frac{1}{c_g}\leq g_{p,\bar p}(s,i)\leq c_g .$$ Here, $\delta>0$ is a positive discount factor, and we  use the convention that $e^{-\delta \tau_n}=0$ when $\tau_n=\infty$. Since $S$ and $I$ are valued in $[0,1]$, the expectation defining $V$ is well defined.  Following the lines of \cite[Lemma 3.1]{P07} 
%and \cite[Proposition 1.1.1]{idris} 
we state a first regularity result on $V$ that will be used hereafter  to set the dynamic programming principle.

\begin{Lemma}\label{RegularityV}
The value function $V(.,.;p_0)$ defined by \eqref{pb:optprotection}  is continuous for all $p_0\in\{0,1\}$.
More precisely, there exists some positive constant $C$ such that for any couple of any conditions $(s_0,i_0)$,  $(s'_0,i'_0)$
%$$|V(s_0,i_0;\tilde \alpha,p_0)|\leq C\left(1+|s_0|+|i_0|\right), $$
%and 
$$|V(s_0,i_0;p_0)-V( s'_0, i'_0;p_0)|\leq C\left(|s_0- s'_0|+|i_0-i'_0|\right).$$
\end{Lemma}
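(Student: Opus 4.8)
The plan is to establish the Lipschitz estimate by a coupling argument: fix an admissible strategy $\alpha=(\tau_n)_{n\geq 0}\in\mathcal A^p(\tilde\alpha)$ and run the controlled SIRS system \eqref{systdynamics} from the two initial conditions $(s_0,i_0,r_0)$ and $(s_0',i_0',r_0')$ with the \emph{same} switching times $\alpha$, the same hacker strategy $\tilde\alpha$ and the same Brownian path $W$. Since the switching times of the cluster owner are $\mathbb F$-stopping times that may depend on the path but not on the initial data, this is legitimate. Writing $\Delta S_t:=S_t^{\alpha,\tilde\alpha}-S_t^{\prime\,\alpha,\tilde\alpha}$ and similarly $\Delta I_t,\Delta R_t$, the key point is that the drift and diffusion coefficients in \eqref{SIRsystem} are polynomial in $(S,I,R)$ and hence locally Lipschitz; because $(S_t,I_t,R_t)$ and $(S_t',I_t',R_t')$ stay in the simplex $\{s+i+r=1\}\cap[0,1]^3$ (which should be recorded as a standing fact, or proved by the structure of the dynamics), these coefficients are in fact globally Lipschitz on the relevant compact domain, with a Lipschitz constant depending only on $\nu,\beta,\gamma,\rho,\kappa,\sigma$. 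Note also that the switching terms $\kappa p_s S_s$ and $\nu a_s S_s$ are linear in $S$, so they contribute no extra difficulty.

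Next I would apply It\^o's formula to $|\Delta S_t|^2+|\Delta I_t|^2+|\Delta R_t|^2$ (or work componentwise and sum), take expectations to kill the martingale part, use the Lipschitz bound on the coefficients together with Young's inequality on the cross terms, and obtain
\[
\mathbb E\!\left[|\Delta S_t|^2+|\Delta I_t|^2+|\Delta R_t|^2\right]\ \leq\ \left(|\Delta S_0|^2+|\Delta I_0|^2+|\Delta R_0|^2\right)+K\int_0^t \mathbb E\!\left[|\Delta S_s|^2+|\Delta I_s|^2+|\Delta R_s|^2\right]ds,
\]
for a constant $K$ independent of $\alpha,\tilde\alpha$. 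Gr\"onwall's lemma then yields $\mathbb E[|\Delta S_t|^2+|\Delta I_t|^2]\leq e^{Kt}\big(|s_0-s_0'|^2+|i_0-i_0'|^2+|r_0-r_0'|^2\big)$, and using $r_0=1-s_0-i_0$, $r_0'=1-s_0'-i_0'$ this is bounded by $Ce^{Kt}\big(|s_0-s_0'|^2+|i_0-i_0'|^2\big)$. A subtlety here is the $e^{Kt}$ growth, which must be controlled against the discount $e^{-\delta t}$ in the cost functional; I would handle this by also establishing a uniform-in-time estimate, or simply by noting that the argument is only needed to conclude continuity and can be localized (but the cleanest route is the $L^2$ bound combined with the discount, provided one is willing to assume $\delta$ large or to argue more carefully — see below).

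Finally I would transfer the estimate on the states to the value function. For a fixed $\alpha$, the difference of the two pathwise costs is bounded, using $|f(s,p)-f(s',p)|=c_V\kappa p|s-s'|\leq c_V\kappa|s-s'|$ and $|c_I I - c_I I'|=c_I|\Delta I|$, by
\[
c_I\int_0^{+\infty} e^{-\delta t}\,\mathbb E|\Delta I_t|\,dt+c_V\kappa\int_0^{+\infty}e^{-\delta t}\,\mathbb E|\Delta S_t|\,dt,
\]
and the running switching-cost term $\sum_{n\geq1}e^{-\delta\tau_n}g_{p_{\tau_{n-1}},p_{\tau_n}}$ is \emph{identical} for both systems, since $\alpha$ and hence the sequence $(\tau_n,p_{\tau_n})$ is the same — this is the crucial benefit of the synchronous coupling and makes the switching costs drop out entirely. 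Bounding $\mathbb E|\Delta I_t|\leq (\mathbb E|\Delta I_t|^2)^{1/2}\leq C e^{Kt/2}\big(|s_0-s_0'|+|i_0-i_0'|\big)$ and integrating against $e^{-\delta t}$, the resulting integrals converge (this is where one needs $\delta>K/2$, or alternatively a uniform-in-time $L^2$ bound obtained by exploiting that the dynamics keep everything in $[0,1]$, which gives $\mathbb E|\Delta I_t|\leq 2$ and lets one split the integral at a finite time $T$). In either case one gets, uniformly in $\alpha$,
\[
\Big|\,\text{cost}(\alpha;s_0,i_0,p_0)-\text{cost}(\alpha;s_0',i_0',p_0)\,\Big|\leq C\big(|s_0-s_0'|+|i_0-i_0'|\big),
\]
and taking the infimum over $\alpha\in\mathcal A^p(\tilde\alpha)$ on both sides (using that $|\inf_\alpha X(\alpha)-\inf_\alpha Y(\alpha)|\leq \sup_\alpha|X(\alpha)-Y(\alpha)|$) gives the claimed bound on $|V^{\tilde\alpha}(s_0,i_0;p_0)-V^{\tilde\alpha}(s_0',i_0';p_0)|$. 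The main obstacle is the competition between the Gr\"onwall exponential $e^{Kt}$ and the discount $e^{-\delta t}$: I expect the paper either invokes an implicit standing assumption on $\delta$, or uses the a priori boundedness of $(S,I,R)$ in the simplex to get a uniform-in-time $L^2$-difference estimate (the diffusion coefficient $\sigma IS$ vanishes on the boundary, and the drift is dissipative enough), which removes the constraint on $\delta$ altogether.
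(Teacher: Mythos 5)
The paper never writes out a proof of this lemma: it only asserts that the result follows ``along the lines of'' Lemma 3.1 of \cite{P07}. Your synchronous-coupling argument --- same control $\alpha$, same hacker strategy $\tilde\alpha$, same Brownian path, Lipschitz coefficients on the compact simplex, It\^o plus Gr\"onwall for the $L^2$ flow estimate, cancellation of the switching-cost sum because it is state-independent and identical for the two coupled systems, elimination of $r_0$ via $r_0=1-s_0-i_0$, and finally $\left|\inf_\alpha X(\alpha)-\inf_\alpha Y(\alpha)\right|\leq \sup_\alpha\left|X(\alpha)-Y(\alpha)\right|$ --- is precisely the flow-estimate argument that this citation stands for, so in substance you are following the route the paper implicitly relies on, and the structural observations (in particular that the switching costs drop out under a common $\alpha$) are the right ones.

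The one genuine issue is the one you flag yourself but do not resolve: Gr\"onwall gives $\mathbb E|\Delta I_t|\leq C e^{Kt/2}\left(|s_0-s_0'|+|i_0-i_0'|\right)$, and integrating against $e^{-\delta t}$ produces a finite Lipschitz constant only when $\delta>K/2$. The fallback you sketch (split the time integral at a finite $T$ and bound the tail using $|\Delta I_t|\leq 2$) does not save the statement as written, because the tail contribution of order $e^{-\delta T}$ is not proportional to $|s_0-s_0'|+|i_0-i_0'|$; optimizing over $T$ yields only a H\"older estimate with exponent $\min(1,2\delta/K)$, i.e.\ (uniform) continuity rather than the Lipschitz bound claimed. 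Nor is a uniform-in-time flow estimate obviously available, since the difference dynamics contain the non-dissipative interaction term coming from $\beta SI$. So as a proof of the lemma exactly as stated (Lipschitz for every $\delta>0$), your argument is incomplete without a condition relating $\delta$ to the Gr\"onwall constant. You should note, though, that the paper offers nothing more than the citation: in \cite{P07} the analogous regularity lemma rests on explicit moment estimates for a linear one-dimensional diffusion together with assumptions on the discount rate, so the gap you identify is as much a gap in the paper's unproved claim as in your proposal; your write-up is honest about exactly the point the paper glosses over.
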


\subsection{Dynamic programming, Viscosity Solutions and value function properties}
In this part, we state the dynamic programming principle which is a well-known property in stochastic optimal control and  allows us to derive the PDE properties of the value function.

\subsubsection{Dynamic Programming Principle and Viscosity solutions}
 
 Following \cite{P07}, the dynamic programming principle is formulated in our context in this way:\\
 
For any initial state $(s_0, i_0)$  and initial regime $(a, p_0 )$
\begin{eqnarray}\label{dpp}
 V(s_0,i_0;p_0)&=&\underset{\alpha \in \mathcal A^p}{\inf} \mathbb E\Big[\int_0^{\theta}e^{-\delta t} (c_I I^{\alpha,\tilde \alpha}_t + f(S^{\alpha,\tilde \alpha}_t,p_t))dt +e^{-\delta \theta}V(S_\theta^{\alpha,\tilde \alpha},I_\theta^{\alpha,\tilde \alpha};p_\theta)\nonumber\\
&  &+\sum_{n\geq 1} e^{-\delta \tau_n} \mathbf{1}_{\{\tau_n\leq \theta\}}  g_{p_{\tau_{n-1}}, p_{\tau_n}}(S^{\alpha,\tilde \alpha}_{ \tau_n },I^{\alpha,\tilde \alpha}_{ \tau_n })\Big] ,
 \end{eqnarray}

where $\theta$ is any stopping time, possibly depending on $\alpha \in \mathcal A^p$.\\

The dynamic programming principle combined with the notion of viscosity solutions are known to be a general and powerful tool for characterizing the value function of a stochastic control problem via a PDE representation.\\

% {\cblue ecrire la craacterisation par EDP sur chaque sous espace $v^{a,p}$}
 
 % {\cblue verofioer que le copntrole dynamqiue avec les region de switching $\mathcal C_{a,p}$, $\mathcal S_{a,p}$ correspondent bien  au controle dynamique optimal, en suivant Pham  }
 
We define now the operator $\mathcal L^{a,p}$ by
 \[\mathcal L^{a,p} v(s,i;a,p)= (\rho(1-s-i) -s(p \kappa +a \nu+\beta i)) \partial_s v +( a \nu s -\gamma i + \beta si) \partial_i v +\frac{\sigma^2}2 s^2i^2( \partial_{ss}  v+ \partial_{ii}  v -2 \partial_{is} v ). \]
 
From now on we define $\overline p$ by $\bar p=0$ if $p=1$, or $\bar p=1$ if $p=0$.  We thus introduce the following system of variational inequalities together with the switching and the continuations regions, for any $p,a\in \{0,1\}$
   \begin{align}\label{eq:VPDE}
& \min [ - \delta v(s,i; a,p) + \mathcal L^{a,p}v (s,i;a,p) + c_Ii+ f(s, p), v(s,i; a,\bar p)+g_{p,\bar p}(s,i)- v(s,i; a,p)]=0,\\
& \quad \quad \text{ on the set  }  \mathcal D:=\{(s,i)\in [0,1]^2,\; s+i \leq 1\}. \nonumber
 \end{align}
 
% where  \[\mathcal D:=\{(s,i)\in [0,1]^2,\; s+i \leq 1\}. \]

 \begin{Remark}\label{rem:s0}
Note that for a SIR model (that is for the special case  $\rho=0$), $s_0=0$ implies $S_t=0$ at any time $t$ and consequently $I_t=i_0 e^{-\gamma t},$ for any time $t\geq 0$. In this case, the system of variational inequalities \eqref{eq:VPDE} admits the initial value 
$$v(0,i;a,p)=\frac{c_I i}{\delta+\gamma}.$$
   %\begin{align}\label{eq:VPDE:SIR}
 %\begin{cases}
 %&\min [ - \delta v(s,i; a,p) + \mathcal Lv (s,i;a,p) + c_Ii+ f(s, p), v^{}(s,i; a,\bar p)+g_{p,\bar p}- v(s,i; a,p)]=0,\; \text{on }  \mathcal D.\\
 %& v(0,i;a,p)=\frac{c_I i}{\delta+\gamma}.
 %\end{cases}
% \end{align}
\end{Remark}

 Given a fixed value $a$ in $\{0,1\}$ of the hacker's strategy, we define the following switching and continuation regions for any $p\in \{0,1\}$: 
\begin{itemize}
\item Switching region from $p$ to $\overline p$:
\begin{equation}\label{defs}
\mathcal S^{a}_{p,\overline p}:=\{(s,i)\in \mathcal D,\; v(s,i;a,p)=v(s,i;a,\overline p)+g_{p,\overline p}(s,i)\};
\end{equation}
\item Continuation region in $p$: 
\begin{equation}\label{defc}
\mathcal C^a_{p}:=\{(s,i)\in \mathcal D,\; v(s,i;a,p)<v(s,i;a,\overline p)+g_{p,\overline p}(s,i)\}.
\end{equation}
\end{itemize}
For sake of completeness, we recall the definition of viscosity solutions to the  system of   variational inequalities \eqref{eq:VPDE}.
\begin{Definition}
\leavevmode\par
\begin{itemize}
    \item [$(i)$] For any $a,p\in\{0,1\}$, a continuous function $v$ on $\mathcal D$ is called a viscosity supersolution (resp. subsolution ) to the system of variational inequalities \eqref{eq:VPDE} if for any $(s,i)\in [0,1]^2$ and  $\varphi\in C^{2,2}(\mathcal D,\mathbb R)$ such that
$\varphi(s,i)= v(s,i;a,p)$ and $(s,i)$ is a local minimum (resp. maximum) of $\varphi-v$, we have
\begin{equation*}
\hspace*{-10mm}
\begin{aligned}\min\{- \delta \varphi(s,i)  + \mathcal L^{a,p}\varphi(s,i)  + c_I i + f(s,p) ,-v(s,i;a,p)+v(s,i;a,\bar p)+ g_{p,\bar p}(s,i)\}&\geq& 0
\\(resp. &\leq& 0).
\end{aligned}
\end{equation*}
    \item[$(ii)$] $v$ is a viscosity solution if it is both a viscosity supersolution and subsolution.
\end{itemize}
\end{Definition}
The value function \eqref{pb:optprotection} can  be characterized as follows.
\begin{Theorem} %{\cblue Ajouter les hypothèses nécessaires sur les coefficients}
\label{thviscosity}
For each $p\in\{0,1\}$, the value function $V$ is a continuous viscosity solution on $\mathcal D$ to the variational inequality \eqref{eq:VPDE}. 
\end{Theorem}
{\bf Proof of the supersolution property:}\\
First, for any $(s,i,p)\in \mathcal D\times\{0,1\}$, we obtain, thanks to \eqref{dpp}, and by choosing the immediate switching control $\tau_1=0$, $p_{\tau_1}=\bar p$, $\tau_n=\infty, n\geq 2$ and $\theta=0$
\begin{equation}\label{supersol}
V(s,i;p)\leq V(s,i;\bar p)+ g_{p,\bar p}(s,i).
\end{equation}
Now, let $\varphi\in C^{2,2}(\mathcal D,\mathbb R)$ such that
\begin{equation}\label{superslmax}
\varphi(s,i)-V(s,i;p)=\underset{\mathcal D}{\min}(\varphi-V(.,.;p))=0
\end{equation}
It remains to show that
\begin{equation}\label{supersolbis}
- \delta \varphi(s,i)  + \mathcal L^{a,p}\varphi(s,i) + c_Ii+ f(s, p) \geq 0.
\end{equation}
By using the dynamic programming principle \eqref{dpp} for $\theta= h$ and taking the no-switching control $\tau_n=\infty$, we get
\begin{equation}\label{condsupersol}
 V(s,i;p)\leq\mathbb E\left[\int_0^{\theta}e^{-\delta t} (c_I I^{\alpha,  \tilde \alpha}_t + f(S^{\alpha,  \tilde \alpha}_t,p_t))dt +e^{-\delta \theta}V(S_\theta^{\alpha,  \tilde \alpha},I_\theta^{\alpha,  \tilde \alpha};p_\theta)\right].
  \end{equation}
Applying Itô's formula to $e^{-\delta t}\varphi(S_t^{\alpha,  \tilde \alpha},I_t^{\alpha,  \tilde \alpha})$ between $0$ and $\theta$ and since\\ $ \left( \partial_s \varphi +  \partial_i \varphi \right) (S_t^{\alpha,  \tilde \alpha},I_t^{\alpha,  \tilde \alpha}) \sigma S_t^{\alpha,  \tilde \alpha}I_t^{\alpha,  \tilde \alpha}$ is bounded, we obtain
\begin{eqnarray}\label{supersolbisbis}
& &\frac{1}{h}\mathbb E\left[\int_0^{\theta}e^{-\delta t}\left(- \delta \varphi(S_t^{\alpha,  \tilde \alpha},I_t^{\alpha,  \tilde \alpha}) + \mathcal L^{a,p}\varphi(S_t^{\alpha,  \tilde \alpha},I_t^{\alpha,  \tilde \alpha})  + c_I I_t^{\alpha,  \tilde \alpha}+ f(S_t^{\alpha,  \tilde \alpha}, p_t)  \right)dt\right]\nonumber\\
&\quad &=\frac{1}{h}\mathbb E\left[e^{-\delta \theta}\varphi(S_\theta^{\alpha,  \tilde \alpha},I_\theta^{\alpha,  \tilde \alpha})-\varphi(s,i)+\int_0^{\theta}e^{-\delta t}(c_I I^{\alpha,  \tilde \alpha}_t + f(S^{\alpha,  \tilde \alpha}_t,p_t))dt\right]\geq 0,\nonumber\\
\end{eqnarray}
where we have used for the last line inequalities \eqref{superslmax} and \eqref{condsupersol}.\\
From the dominated convergence theorem, this yields by sending $h$ to zero
$$- \delta \varphi(s,i)  + \mathcal L^{a,p}\varphi(s,i) + c_I i + f(s,p) \geq 0.$$
By combining with \eqref{supersol}, we get
\begin{equation}\label{supersolproperty}
\min\{- \delta \varphi(s,i)  + \mathcal L^{a,p}\varphi(s,i)  + c_I i + f(s,p) ,-V(s,i;p)+V(s,i;\bar p)+ g_{p,\bar p}(s,i)\}\geq 0.
\end{equation}
\fproof\\
{\bf Proof of the subsolution property}:\\
Let $(s,i,p)\in [0,1] \times [0,1]\times\{0,1\}$ and  $\varphi\in C^{2,2}(\mathcal D,\mathbb R)$ such that
\begin{equation}\label{supersolmax}
\varphi(s,i)- V(s,i;p)=\underset{\mathcal D}{\max}(\varphi-V(.,.;p))=0
\end{equation}
We argue by contradiction by assuming in the contrary that
\begin{eqnarray*}
- \delta \varphi(s,i)  + \mathcal L^{a,p}\varphi(s,i) + c_I i+ f(s,p)> 0\,,\label{contradiction}\\
\mbox{ and } \quad -V(s,i;p)+V(s,i;\bar p)+ g_{p,\bar p}(s,i)>0\,.\label{contradictionbis}
\end{eqnarray*}
By continuity of $V$, $\varphi$ and its derivatives, there exists some $ \epsilon >0$ such that
\begin{eqnarray}
- \delta \varphi(s',i')  + \mathcal L^{a,p}\varphi(s',i') + c_I i' + f(s',p) &\geq &\epsilon\,,\, \forall (s',i')\in B_\epsilon(s,i)\nonumber\\
\label{inequalitycontradiction}\\
-V(s',i';p)+V(s',i';\bar p)+ g_{p,\bar p}(s,i)&\geq&\epsilon\,,\, \forall (s',i')\in B_\epsilon(s,i).\nonumber\\
\label{inequalitycontradictionbis}
\end{eqnarray}
For any $\alpha=(\tau_n)_{n\geq 1}\in\mathcal A^p$, consider the exit time $\tau_{\epsilon}:=\inf\{t\geq 0, (S_t^{\alpha,  \tilde \alpha},I_t^{\alpha,  \tilde \alpha})\notin B_\epsilon(s,i)\}$.
By applying Itô's formula to $e^{-\delta t}\varphi(S_t^{\alpha,  \tilde \alpha},I_t^{\alpha,  \tilde \alpha})$ between $0$ and $\theta=\tau_1\wedge\tau_\epsilon$, we have by noting that before $\theta$, $(S^{\alpha,  \tilde \alpha},I^{\alpha,  \tilde \alpha})$ stays in regime $p$ and in the ball $B_\epsilon(s,i)$:
\begin{eqnarray}\label{subsol}
V(s,i;p)\!&=&\!\varphi(s,i)\!\!=\!\!\mathbb E\left[\!e^{-\delta \theta}\varphi(S_\theta^{\alpha,  \tilde \alpha},I_\theta^{\alpha,  \tilde \alpha})\!\!+\!\!\int_0^{\theta}\!\!e^{-\delta t}(\delta \varphi(S_t^{\alpha,  \tilde \alpha},I_t^{\alpha,  \tilde \alpha}) \!-\! \mathcal L^{a,p}\varphi(S_t^{\alpha,  \tilde \alpha},I_t^{\alpha,  \tilde \alpha}))  dt\right]\nonumber\\
\!&\leq&\!\mathbb E\left[\!e^{-\delta \theta}V(S_\theta^{\alpha,  \tilde \alpha},I_\theta^{\alpha,  \tilde \alpha};p)\!\!+\!\!\int_0^{\theta}\!e^{-\delta t}(\delta \varphi(S_t^{\alpha,  \tilde \alpha},I_t^{\alpha,  \tilde \alpha}) \!-\! \mathcal L^{a,p}\varphi(S_t^{\alpha,  \tilde \alpha},I_t^{\alpha,  \tilde \alpha}))dt\right]\nonumber\\
\end{eqnarray}
Now, since $\theta=\tau_1\wedge\tau_\rho$, we have
\begin{eqnarray*}
e^{-\delta \theta}V(S_\theta^{\alpha,  \tilde \alpha},I_\theta^{\alpha,  \tilde \alpha}; p_\theta)
&+&\!\sum_{n\geq 1} e^{-\delta \tau_n} \mathbf{1}_{\{\tau_n\leq \theta\}}  g_{p_{\tau_{n-1}}, p_{\tau_n}}(S_{\tau_n}^{\alpha,  \tilde \alpha},I_{\tau_n}^{\alpha,  \tilde \alpha})\\&\geq&\!\!e^{-\delta \tau_1}(V(S_{\tau_1}^{\alpha,  \tilde \alpha},I_{\tau_1}^{\alpha,  \tilde \alpha}; \bar p)+  g_{p,\bar p })\mathbf 1_{\tau_1\leq \tau_\epsilon}\!+\!e^{-\delta \tau_\epsilon}V(S_{\tau_\epsilon}^{\alpha,  \tilde \alpha},I_{\tau_\epsilon}^{\alpha,  \tilde \alpha};p)\mathbf 1_{\tau_\epsilon < \tau_1}\\
\!\!&\geq&\!\!e^{-\delta \tau_1}(V(S_{\tau_1}^{\alpha,  \tilde \alpha},I_{\tau_1}^{\alpha,  \tilde \alpha};p)+  \epsilon)\mathbf 1_{\tau_1\leq \tau_\epsilon}\!+\!e^{-\delta \tau_\epsilon}V(S_{\tau_\epsilon}^{\alpha,  \tilde \alpha},I_{\tau_\epsilon}^{\alpha,  \tilde \alpha};p)\mathbf 1_{\tau_\epsilon < \tau_1}\\
\!\!&=&\!\!e^{-\delta \theta}V(S_{\theta}^{\alpha,  \tilde \alpha},I_{\theta}^{\alpha,  \tilde \alpha};p)+ \epsilon e^{-\delta \tau_1}\mathbf 1_{\tau_1\leq \tau_\epsilon},
\end{eqnarray*}
where the inequality follows from \eqref{inequalitycontradictionbis}. By plugging into \eqref{subsol} and using \eqref{inequalitycontradiction}, we get
\begin{eqnarray}\label{subsolbis}
V(s,i;p)\!&\leq&\!\mathbb E\Big[\!\int_0^{\theta}\!e^{-\delta t}(\!c_I I^{\alpha,  \tilde \alpha}_t \!+ \!f(S^{\alpha,  \tilde \alpha}_t,p_t))dt+e^{-\delta \theta}V(S_\theta^{\alpha,  \tilde \alpha},I_\theta^{\alpha,  \tilde \alpha};p_\theta)\nonumber\\
&+&\sum_{n\geq 1} e^{-\delta \tau_n} \mathbf{1}_{\{\tau_n\leq \theta\}} g_{p_{\tau_{n-1}}, p_{\tau_n}}(S_{\tau_n}^{\alpha,  \tilde \alpha},I_{\tau_n}^{\alpha,  \tilde \alpha})\Big]
- \epsilon \,\mathbb E\left[\!\int_0^{\theta}\!e^{-\delta t}dt+e^{-\delta \tau_1}\mathbf 1_{\tau_1\leq \tau_\epsilon}\right].\nonumber\\
\end{eqnarray}
On the other hand we note from the result shown in the proof of Theorem 3.1 \cite{P07} that there  exists some positive constant $c_0>0$ such that
$$ \mathbb E\left[\!\int_0^{\theta}\!e^{-\delta t}dt+e^{-\delta \tau_1}\mathbf 1_{\tau_1\leq \tau_\epsilon}\right]\geq c_0, \quad \forall \alpha\in\mathcal A^p.$$
Finally, by including this last inequality (uniform in $\alpha$) into \eqref{subsolbis}, we obtain :
\begin{eqnarray*}
V(s,i;p)\!&\leq&\!\inf_{\alpha \in \mathcal A^p} \mathbb E\Big[\!\int_0^{\theta} e^{-\delta t}(c_I I^{\alpha,  \tilde \alpha}_t \!+ \!f(S^{\alpha,  \tilde \alpha}_t,p_t))dt+e^{-\delta \theta}V(S_\theta^{\alpha,  \tilde \alpha},I_\theta^{\alpha,  \tilde \alpha}; p_\theta)\!\\
& &+\!\sum_{n\geq 1} e^{-\delta \tau_n} \mathbf{1}_{\{\tau_n\leq \theta\}}  g_{p_{\tau_{n-1}}, p_{\tau_n}}(S_{\tau_n}^{\alpha,  \tilde \alpha},I_{\tau_n}^{\alpha,  \tilde \alpha})\Big]
- \epsilon \,c_0,
\end{eqnarray*}
which is in contradiction with dynamic programming principle \eqref{dpp}.
\fproof\\
In a manner similar to Lemma 4.1 in \cite{PH07}, we state that $V$ is a viscosity solution of the variational system on $\mathcal D$, and a regular solution of a PDE on each continuation region (for a constant strategy of the hacker), satisfying smooth fit condition on the boundary. 

\begin{Lemma}
 For  a fixed  cluster owner's strategy  $p\in\{0,1\}$ in a fixed hacking environment  $a\in\{0,1\}$, the value function $V(.,.;p)$ is smooth $C^{2,2}$ on $\mathcal C^a_{p}$, and satisfies in a classical sense the following PDE: 
\begin{equation}\label{PDE:vC}  -\delta v(s,i; a,p) + \mathcal L^{a,p} v(s,i;a,p)+ c_I i + f(s,p)=0,  \quad  (s,i)\in\mathcal C^a_{p}.
\end{equation}

\end{Lemma}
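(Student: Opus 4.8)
The plan is to show that on the continuation region $\mathcal C^a_p$ the obstacle term in the variational inequality is strictly inactive, so that the viscosity solution actually solves the linear PDE \eqref{PDE:vC}, and then to upgrade regularity from viscosity to classical by interior elliptic (parabolic-type) Schauder theory applied to the second-order operator $\mathcal L^{a,p}$. First I would fix $p\in\{0,1\}$ and a constant hacker strategy $a$, and recall from Theorem \ref{thviscosity} that $V^{\tilde\alpha}(\cdot,\cdot;p)$ is a continuous viscosity solution of \eqref{eq:VPDE} on $\mathcal D$. On $\mathcal C^a_p$ we have by definition $V^{\tilde\alpha}(s,i;a,p) < V^{\tilde\alpha}(s,i;a,\bar p) + g_{p,\bar p}$, hence the second term in the $\min$ in \eqref{eq:VPDE} is strictly positive; since $\mathcal C^a_p$ is open (as $V^{\tilde\alpha}$ is continuous, being Lipschitz by Lemma \ref{RegularityV}), around any point of $\mathcal C^a_p$ this strict inequality persists on a neighbourhood, so the $\min$ in \eqref{eq:VPDE} is attained by the first argument. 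Consequently $V^{\tilde\alpha}(\cdot,\cdot;p)$ is a viscosity solution of the linear PDE $-\delta v + \mathcal L^{a,p}v + c_I i + f(s,p) = 0$ on $\mathcal C^a_p$ — i.e. both a viscosity sub- and supersolution of this single equation, with no obstacle.

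Next I would invoke interior regularity for linear second-order PDEs. The operator $\mathcal L^{a,p}$ has smooth (indeed polynomial) coefficients, a smooth zeroth- and first-order part, and the source term $c_I i + f(s,p) = c_I i + c_V\kappa s p$ is smooth; the only delicate point is that the diffusion matrix is $\tfrac{\sigma^2}{2}s^2 i^2$ times $\begin{pmatrix}1 & -1\\ -1 & 1\end{pmatrix}$, which is degenerate (rank one) and vanishes on $\{s=0\}\cup\{i=0\}$. So I would first work on the open subset of $\mathcal C^a_p$ where $s>0$ and $i>0$: there one can diagonalize by the change of variables aligning with the direction $(1,-1)$ (equivalently, note that $\mathcal L^{a,p}$ acts on the variable $s+i$ only through first order terms, since $\partial_{ss}+\partial_{ii}-2\partial_{is} = (\partial_s-\partial_i)^2$), so that in coordinates $(u,\xi)=(s+i,\,s-i)$ the operator is uniformly parabolic/elliptic in the $\xi$ variable locally away from the axes. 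Standard interior Schauder estimates (or the classical results quoted for this class of problems, e.g. along the lines of Lemma 4.1 in \cite{PH07} and the references therein on regularity of solutions to variational inequalities) then give $V^{\tilde\alpha}(\cdot,\cdot;p)\in C^{2,2}$ there, hence it solves \eqref{PDE:vC} in the classical sense. On the degenerate part $\{s=0\}$ or $\{i=0\}$ intersected with $\mathcal C^a_p$, one handles it either directly — on $\{s=0\}$ the dynamics reduces to $dI_t = -\gamma I_t dt$ and $S_t\equiv 0$ (cf. Remark \ref{rem:s0}), so $V^{\tilde\alpha}(0,i;a,p)$ is given explicitly and smooth in $i$ — or by noting that $\{i=0\}$ is absorbing-free but the required $C^{2,2}$ estimate follows from a one-sided (Hopf-type) boundary argument; in any case the statement of the Lemma as used elsewhere concerns the interior of $\mathcal C^a_p$, where the degeneracy is the only issue and is resolved by the $(u,\xi)$ splitting.

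The main obstacle I expect is precisely this degeneracy of the diffusion coefficient: the operator $\mathcal L^{a,p}$ is not uniformly elliptic, so one cannot apply black-box interior Schauder estimates verbatim. The resolution — the key step — is the observation that the Hessian part is a perfect square in the single direction $(1,-1)$, so after the linear change of variables $(s,i)\mapsto(s+i,s-i)$ the PDE becomes, in the region $s,i>0$, a one-dimensional nondegenerate second-order equation in the transverse variable with smooth coefficients and a first-order transport term in the longitudinal variable; for such hypoelliptic/parabolic-in-one-variable operators Hörmander's condition holds and classical interior regularity applies, giving the $C^{2,2}$ conclusion. The remaining work — verifying the smooth-fit statement alluded to in the final sentence of the excerpt, i.e. that $V^{\tilde\alpha}$ matches $C^1$ across $\partial\mathcal C^a_p$ — would follow the standard argument (squeeze the $C^1$ viscosity sub/supersolution inequalities at the free boundary), and since the Lemma as stated only asserts interior $C^{2,2}$ regularity and the classical PDE on $\mathcal C^a_p$, I would not need the full smooth-fit argument to complete this particular proof.
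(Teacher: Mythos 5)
Your first step is exactly the intended route and is fine: since $V^{\tilde\alpha}$ is continuous (Lemma \ref{RegularityV}) the region $\mathcal C^a_p$ is open, the obstacle term in \eqref{eq:VPDE} is strictly positive there, and so $V^{\tilde\alpha}(\cdot,\cdot;p)$ is a viscosity solution of the linear equation \eqref{PDE:vC} on $\mathcal C^a_p$. Note that the paper itself gives no written proof of this lemma; it only invokes the analogy with the one-dimensional smooth-fit analysis of \cite{P07} (where the diffusion coefficient is nondegenerate), so anything you do here goes beyond what is on the page.

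The genuine gap is in your regularity upgrade. The second-order part of $\mathcal L^{a,p}$ is $\tfrac{\sigma^2}{2}s^2i^2(\partial_s-\partial_i)^2$, which is rank one \emph{everywhere} in $\mathcal D$, not only on the axes; in the coordinates $(u,\xi)=(s+i,s-i)$ you get an equation that is second order in $\xi$ only and first order in $u$, and your assertions that it is ``uniformly parabolic in $\xi$ away from the axes'' and that ``H\"ormander's condition holds'' are not verified. H\"ormander's condition requires the brackets of the diffusion field $\sigma s i(\partial_s-\partial_i)$ with the specific drift of the SIRS dynamics to span $\mathbb{R}^2$ at every point of $\mathcal C^a_p$; this must be computed and can fail at points where the drift aligns with the direction $(1,-1)$ and the relevant brackets degenerate. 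Nor can you treat $u$ as a time variable for a parabolic theory, since the drift component in the $u$-direction, $\rho(1-s-i)-p\kappa s-\gamma i$, changes sign in the domain. Finally, even granting hypoellipticity, H\"ormander's theorem applies to distributional solutions, whereas you have a viscosity solution; the standard bridge (solve the linear Dirichlet problem classically on a small ball inside $\mathcal C^a_p$ and identify it with $V^{\tilde\alpha}$ by comparison, which is how the nondegenerate one-dimensional argument of \cite{P07} proceeds) is exactly what is unavailable for this degenerate operator without further work. So the central analytic difficulty — interior $C^{2,2}$ regularity for a two-dimensional, rank-one-diffusion equation — is named in your proposal but not actually resolved by it.
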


We can also derive the smooth-fit property of the value function 
$V$ through the boundaries of the switching regions by following Theorem 4.1 in \cite{P07}.
\begin{Lemma}
For all $p\in\{0,1\}$, and constant hacker's strategy $a$, the value function $V(.,.; p)$ is continuously differentiable on $\mathcal D$. Moreover, at $(s,i)\in\mathcal S_{p,\bar p}^a$, we have 
$$\partial_s V(s,i; p)=\partial_s V(s,i; \bar p) \quad \text{and} \quad \partial_i V(s,i; p)=\partial_i V(s,i;\bar p).$$
\end{Lemma}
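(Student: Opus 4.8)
The plan is to follow and adapt the three–step scheme of \cite[Theorem 4.1]{P07}. Fix $p\in\{0,1\}$ and a constant value $a$ of the hacker's strategy. We already know that $V^{\tilde\alpha}(\cdot,\cdot;p)$ is continuous on $\mathcal D$ (Lemma \ref{RegularityV}) and, by the previous Lemma, is $C^{2,2}$ on the continuation region $\mathcal C^a_p$, where it solves \eqref{PDE:vC} classically. A first observation is that $\mathcal S^a_{p,\bar p}\subset\mathcal C^a_{\bar p}$: if some $(s,i)$ lay in $\mathcal S^a_{p,\bar p}\cap\mathcal S^a_{\bar p,p}$, then $V^{\tilde\alpha}(s,i;p)=V^{\tilde\alpha}(s,i;\bar p)+g_{p,\bar p}$ and $V^{\tilde\alpha}(s,i;\bar p)=V^{\tilde\alpha}(s,i;p)+g_{\bar p,p}$ would force $g_{p,\bar p}+g_{\bar p,p}=0$, contradicting $g_{0,1},g_{1,0}>0$. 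Since $\mathcal C^a_{\bar p}$ is open and $V^{\tilde\alpha}(\cdot,\cdot;\bar p)$ is $C^{2,2}$ there, the identity $V^{\tilde\alpha}(\cdot,\cdot;p)=V^{\tilde\alpha}(\cdot,\cdot;\bar p)+g_{p,\bar p}$ on $\mathcal S^a_{p,\bar p}$ shows that $V^{\tilde\alpha}(\cdot,\cdot;p)$ is $C^{2,2}$ on $\mathrm{int}(\mathcal S^a_{p,\bar p})$ and that the claimed gradient equality already holds there. Hence both assertions reduce to proving that $V^{\tilde\alpha}(\cdot,\cdot;p)$ is continuously differentiable across the free boundary $\Gamma:=\partial\mathcal S^a_{p,\bar p}\cap\mathrm{int}(\mathcal D)$ with $\nabla V^{\tilde\alpha}(\cdot;p)=\nabla V^{\tilde\alpha}(\cdot;\bar p)$ on $\Gamma$, the matching on the faces of $\mathcal D$ then following by continuity (using Remark \ref{rem:s0} on $\{s=0\}$).

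Next I would fix $x_0=(s_0,i_0)\in\Gamma$. Since $\Gamma\subset\mathcal S^a_{p,\bar p}\subset\mathcal C^a_{\bar p}$, the function $\psi_1:=V^{\tilde\alpha}(\cdot,\cdot;\bar p)+g_{p,\bar p}$ is $C^{2,2}$ in a full neighbourhood of $x_0$, and on the continuation side $V^{\tilde\alpha}(\cdot,\cdot;p)$ is $C^{2,2}$ up to $\Gamma$ through \eqref{PDE:vC}; thus $V^{\tilde\alpha}(\cdot,\cdot;p)$ admits one-sided directional derivatives at $x_0$, those from the switching side being $\nabla\psi_1(x_0)=\nabla V^{\tilde\alpha}(x_0;\bar p)$. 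From the inequality $V^{\tilde\alpha}(\cdot,\cdot;p)\le\psi_1$ on $\mathcal D$ (see \eqref{supersol}), the nonnegative function $w:=\psi_1-V^{\tilde\alpha}(\cdot,\cdot;p)$ attains value $0$, hence a global minimum, at $x_0$; differentiating $w$ along a direction $e$ pointing from $x_0$ into $\mathcal C^a_p$ yields $\nabla^{\mathrm{cont}}V^{\tilde\alpha}(x_0;p)\cdot e\le\nabla V^{\tilde\alpha}(x_0;\bar p)\cdot e$, where $\nabla^{\mathrm{cont}}$ denotes the gradient from the continuation side. This is one half of the smooth fit.

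For the reverse inequality I would argue by contradiction along the lines of \cite[Theorem 4.1]{P07}: a strict kink of $V^{\tilde\alpha}(\cdot,\cdot;p)$ across $\Gamma$ at $x_0$ is excluded by extending the continuation-side solution $\psi_2$ of \eqref{PDE:vC} to a full neighbourhood of $x_0$ — so that locally $V^{\tilde\alpha}(\cdot,\cdot;p)=\min(\psi_1,\psi_2)$, the half-inequality above forcing $\psi_1<\psi_2$ on the switching side and $\psi_1=\psi_2$ on $\Gamma$ — and then confronting $\psi_1$ and $\psi_2$ via a comparison / strong-maximum-principle argument for the linear operator $\mathcal L^{a,p}-\delta$: on $\mathrm{int}(\mathcal S^a_{p,\bar p})$ the variational inequality \eqref{eq:VPDE} holds classically for $V^{\tilde\alpha}(\cdot,\cdot;p)=\psi_1$ with vanishing obstacle term, hence $-\delta\psi_1+\mathcal L^{a,p}\psi_1+c_Ii+f(s,p)\ge0$ there, while $\psi_2$ solves \eqref{PDE:vC} and $V^{\tilde\alpha}(\cdot,\cdot;p)$ is a viscosity supersolution of \eqref{PDE:vC} throughout the neighbourhood, and these facts are incompatible with $\psi_1<\psi_2$; a kink of the opposite sign is ruled out symmetrically by touching $V^{\tilde\alpha}(\cdot,\cdot;p)$ from above and using the subsolution property of Theorem \ref{thviscosity}. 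Consequently the one-sided gradients agree at every $x_0\in\Gamma$, so $V^{\tilde\alpha}(\cdot,\cdot;p)\in C^1(\mathcal D)$ and $\nabla V^{\tilde\alpha}(\cdot;p)=\nabla V^{\tilde\alpha}(\cdot;\bar p)$ on $\mathcal S^a_{p,\bar p}$.

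The main obstacle is this last contradiction step, and more precisely the adaptation of the one-dimensional arguments of \cite{P07} to the present degenerate two-dimensional state space: the second-order part of $\mathcal L^{a,p}$ is the rank-one operator $\tfrac{\sigma^2}{2}s^2i^2(\partial_s-\partial_i)^2$ — the Brownian motion acts only along the direction $(1,-1)$, since $S+I$ is of finite variation — so the interior regularity of $\psi_2$ and the strong maximum principle invoked above must be obtained through a H\"ormander-type hypoellipticity, recovering the missing direction from the bracket of the diffusion vector field with the drift; this is legitimate on $\mathrm{int}(\mathcal D)\cap\{si\neq0\}$ and indeed already underpins the $C^{2,2}$ statement of the previous Lemma, but the genuinely degenerate faces $\{s=0\}$ (cf. Remark \ref{rem:s0}) and $\{i=0\}$ of $\mathcal D$ must be treated separately. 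A secondary point, implicit in \cite{P07} where the switching regions are unions of intervals, is that one should either establish enough regularity of $\Gamma$ to speak cleanly of the two sides of $\Gamma$ at $x_0$, or phrase the whole argument in terms of one-sided second-order jets.
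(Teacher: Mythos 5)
The paper contains no written proof of this lemma: it is stated with the single indication that it can be derived ``by following Theorem 4.1 in \cite{P07}'', i.e.\ Pham's one-dimensional smooth-fit theorem. Your proposal therefore takes exactly the route the paper intends, and your preparatory steps are sound and are what a faithful transposition of \cite{P07} would give: the disjointness of $\mathcal S^a_{p,\bar p}$ and $\mathcal S^a_{\bar p,p}$ from $g_{0,1},g_{1,0}>0$, hence $\mathcal S^a_{p,\bar p}\subset\mathcal C^a_{\bar p}$; the $C^{2,2}$ regularity and gradient identity on the interior of the switching region inherited from $V^{\tilde\alpha}(\cdot,\cdot;\bar p)+g_{p,\bar p}$; and the half of the smooth fit obtained by minimising $\psi_1-V^{\tilde\alpha}(\cdot,\cdot;p)\ge 0$ at a free-boundary point. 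In this sense you have done no less than the authors, and considerably more in detail.

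The obstacles you flag in your last paragraph are, however, genuine, and they are resolved neither in your sketch nor in the paper. Pham's exclusion of a kink is a viscosity argument that tests the value function with paraboloids whose curvature transverse to the boundary is sent to infinity, and it requires the diffusion to be nondegenerate in that transverse direction; here the second-order part of $\mathcal L^{a,p}$ is $\tfrac{\sigma^2}{2}s^2i^2(\partial_s-\partial_i)^2$, so this mechanism can only rule out kinks along $(1,-1)$ and only at points with $si\neq 0$. A kink of $V^{\tilde\alpha}(\cdot,\cdot;p)$ across $\Gamma$ in the degenerate direction $(1,1)$, or at points of $\{si=0\}$, is not excluded by it, and H\"ormander hypoellipticity --- which yields interior smoothness of solutions of the PDE --- says nothing about $C^1$ matching of two different functions across a free boundary. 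Similarly, the ``extension of $\psi_2$ to a full neighbourhood of $x_0$'' and the local representation $V=\min(\psi_1,\psi_2)$ presuppose a regularity of $\Gamma$ (or an extension property of the obstacle-problem solution past the boundary) that has not been established; in \cite{P07} this issue is invisible because the switching regions are unions of intervals. So your contradiction step is a plan rather than a proof: judged against the paper your proposal is a more explicit version of the citation the authors give, but closing the argument in this degenerate two-dimensional setting would require additional input (for instance a penalisation or probabilistic proof of $C^1$ regularity, or a restriction of the statement to the directions and the region where the noise actually acts).
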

\begin{Remark}
 The uniqueness of the system of variational inequalities \eqref{eq:VPDE} has been studied in \cite{el2010optimal}. In particular, the same arguments as those used in Theorem~5 of \cite{el2010optimal} can be applied to show that the viscosity solution to \eqref{eq:VPDE} is unique within the class of continuous functions with linear growth on $\mathcal{D}$.

\end{Remark}

\subsubsection{Link with a system of reflected BSDE}

In this section, we give the probabilistic representation of the value function $V$ solving \eqref{pb:optprotection} as a system of reflected BSDE with infinite horizon. 
We introduce the following spaces. %{\color{blue} a voir ici si il faut un peu changer ces espaces car on est dans $\mathbb  G$ }
\begin{itemize}
\item $\mathcal S^2(\mathbb R)$  is the set of $\mathbb R$-valued adapted and c\`adl\`ag processes $(Y_t)_{t\geq 0}$ such that $$\mathbb E[\sup_{t\geq 0}|Y_t|^2]<\infty,$$
\item $\mathcal H^2(\mathbb R)$ is the set of $\mathbb R$-valued, progressively measurable processes $(Z_t)_{t\geq 0}$ such that
 $$\mathbb E\Big[\int_0^\infty|Z_t|^2 dt\Big]<\infty,$$
 \item $\mathcal K^2(\mathbb R)$ is the set of non-decreasing processes $K$ in $\mathcal S^2(\mathbb R)$ with $K_0 = 0$.
\end{itemize}

We set for any $(a,p)\in \{0,1\}\times \{0,1\}$

\begin{equation}\label{relfectedBSDE}\begin{cases}
 e^{-\delta t}Y_t^{a,p}=\displaystyle\int_t^\infty e^{-\delta s}[c_I I_t^{\alpha,\tilde\alpha} + f(S_t^{\alpha,\tilde\alpha},p_t)] ds -\int_t^\infty e^{-\delta s}Z_s^{a,p}dW_s +K_\infty^{a,p}-K_t^{a,p}\\
 \lim\limits_{t\to\infty}e^{-\delta t}  Y_t^{a,p}=0\\
 Y_t^{a,p}\leq Y_t^{a,\overline p}+g_{p,\overline p}(S_{t}^{\alpha,  \tilde \alpha},I_{t}^{\alpha,  \tilde \alpha})\\
\displaystyle \int_0^\infty e^{-\delta s}   [Y_s^{a,p}- (Y_s^{a,\overline p}+g_{p,\overline p}(S_{t}^{\alpha,  \tilde \alpha},I_{t}^{\alpha,  \tilde \alpha}))]dK_s^{a,p}=0.
\end{cases}\end{equation}
Reflected BSDEs with finite horizon have been widely investigated in the literature, see for example \cite{el1997reflected,hu2010multi,chassagneux2011note}, and \cite{hamadene1999infinite,aid2014probabilistic,el2010optimal,aazizi2013optimal} for the link with switching problem. Applying Proposition 3 and Theorem 5 in \cite{el2010optimal}, we directly get the following result. 
\begin{Proposition}\label{RBSDE}
  The reflected BSDE \eqref{relfectedBSDE} admits a unique solution $(Y^{a,p},Z^{a,p},K^{a,p})\in \mathcal S^2(\mathbb R)\times \mathcal H^2(\mathbb R)\times \mathcal K^2 (\mathbb R)$ for any regime $(a,p)$ and for any initial regime $(a_0,p_0)\in \{0,1\}\times \{0,1\}$, we have
  \[ Y_0^{a_0,p_0}=V^{\tilde\alpha}(s_0,i_0;p_0).\]
\end{Proposition}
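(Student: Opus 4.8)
The plan is to combine two ingredients: the existence and uniqueness theory for infinite-horizon systems of reflected BSDEs with interconnected obstacles, and a verification argument identifying $Y^{a_0,p_0}_0$ with $V^{\tilde\alpha}$. The first step is to put \eqref{relfectedBSDE} into standard form. Fixing a value $a\in\{0,1\}$ of the exogenous attack regime and setting $\bar Y^{a,p}_t:=e^{-\delta t}Y^{a,p}_t$, $\bar Z^{a,p}_t:=e^{-\delta t}Z^{a,p}_t$ and $d\bar K^{a,p}_t:=e^{-\delta t}dK^{a,p}_t$, the system becomes, for $p\in\{0,1\}$, a two-mode system of reflected BSDEs on $[0,\infty)$ with null generator, running reward $\phi^p_s:=e^{-\delta s}(c_I I^{\alpha,\tilde\alpha}_s+f(S^{\alpha,\tilde\alpha}_s,p_s))$, upper obstacle $\bar Y^{a,p}_t\le\bar Y^{a,\bar p}_t+e^{-\delta t}g_{p,\bar p}$, and Skorokhod condition $\int_0^\infty[\bar Y^{a,p}_s-\bar Y^{a,\bar p}_s-e^{-\delta s}g_{p,\bar p}]\,d\bar K^{a,p}_s=0$. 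Because $(S^{\alpha,\tilde\alpha},I^{\alpha,\tilde\alpha})$ is valued in $[0,1]^2$, one has $0\le\phi^p_s\le e^{-\delta s}(c_I+c_V\kappa)$, so $\phi^p$ is $\mathbb F$-progressive, non-negative, and $\int_0^\infty\phi^p_s\,ds$ is bounded; moreover $g_{0,1},g_{1,0}$ are strictly positive constants. These are exactly the standing assumptions of the infinite-horizon switching theory in \cite{hamadene1999infinite,el2010optimal,aazizi2013optimal,aid2014probabilistic}.

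Existence and uniqueness then follow by invoking that theory. Equivalently, one may run a monotone Picard scheme: start from the non-reflected BSDEs and, at each step, solve the one-sided reflected BSDE (see e.g. \cite{el1997reflected}) in mode $p$ against the previous iterate in mode $\bar p$; comparison gives monotonicity of the iterates, and the strict positivity of $g_{0,1}+g_{1,0}$ together with $\delta>0$ yields the contraction needed for convergence, the limit solving the coupled system. Undoing the change of variables produces $(Y^{a,p},Z^{a,p},K^{a,p})$; the a priori bounds $0\le Y^{a,p}_t\le(c_I+c_V\kappa)/\delta$ (the lower one because all costs are non-negative, the upper one by comparison with the never-switch control) give $Y^{a,p}\in\mathcal S^2(\mathbb R)$ and force $\lim_{t\to\infty}e^{-\delta t}Y^{a,p}_t=0$, while the Itô isometry applied to the BSDE, using the boundedness of $\phi^p$ and $Y^{a,p}$, yields $Z^{a,p}\in\mathcal H^2(\mathbb R)$ and $K^{a,p}\in\mathcal K^2(\mathbb R)$.

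For the identification, fix $(a_0,p_0)$ and write $J^{\tilde\alpha}(\alpha):=\mathbb E[\int_0^\infty e^{-\delta t}(c_I I^{\alpha,\tilde\alpha}_t+f(S^{\alpha,\tilde\alpha}_t,p_t))dt+\sum_{n\ge1}e^{-\delta\tau_n}g_{p_{\tau_{n-1}},p_{\tau_n}}]$ for the cost in \eqref{pb:optprotection}. Given any $\alpha=(\tau_n)_{n\ge1}\in\mathcal A^p(\tilde\alpha)$ with mode process $(p_t)$, on each interval $[\tau_n,\tau_{n+1})$ the mode equals $p_{\tau_n}$, so $t\mapsto e^{-\delta t}Y^{a_0,p_{\tau_n}}_t$ obeys its BSDE there, and at $\tau_{n+1}$ the obstacle gives $Y^{a_0,p_{\tau_n}}_{\tau_{n+1}}\le Y^{a_0,p_{\tau_{n+1}}}_{\tau_{n+1}}+g_{p_{\tau_n},p_{\tau_{n+1}}}$; concatenating over $n$, using monotonicity of $K^{a_0,p}$ and the uniform bound on $Y$ together with $e^{-\delta\tau_n}\to0$ to pass to the limit, one gets $Y^{a_0,p_0}_0\le J^{\tilde\alpha}(\alpha)$, hence $Y^{a_0,p_0}_0\le V^{\tilde\alpha}(s_0,i_0;p_0)$ after taking the infimum. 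For the converse, introduce $\alpha^\star$ defined by $\tau^\star_0=0$, $p^\star_0=p_0$, $\tau^\star_{n+1}:=\inf\{t\ge\tau^\star_n:\,Y^{a_0,p^\star_{\tau^\star_n}}_t=Y^{a_0,\bar p^\star_{\tau^\star_n}}_t+g_{p^\star_{\tau^\star_n},\bar p^\star_{\tau^\star_n}}\}$, the mode flipping at each $\tau^\star_n$; by the Skorokhod condition $K^{a_0,p}$ is flat on each $[\tau^\star_n,\tau^\star_{n+1})$, so along $\alpha^\star$ all the previous inequalities become equalities and $Y^{a_0,p_0}_0=J^{\tilde\alpha}(\alpha^\star)\ge V^{\tilde\alpha}(s_0,i_0;p_0)$, \emph{provided} $\alpha^\star\in\mathcal A^p(\tilde\alpha)$.

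The main obstacle is exactly this admissibility, i.e.\ that $\tau^\star_n\uparrow+\infty$ a.s. First one checks the $\tau^\star_n$ are strictly increasing: $Y^{a_0,p}_t=Y^{a_0,\bar p}_t+g_{p,\bar p}$ and $Y^{a_0,\bar p}_t=Y^{a_0,p}_t+g_{\bar p,p}$ cannot hold simultaneously, since together they force $g_{0,1}+g_{1,0}=0$, contradicting $g_{0,1},g_{1,0}>0$, so no instantaneous string of switches occurs and $\alpha^\star$ is a well-defined increasing sequence of $\mathbb F$-stopping times. Then, for each finite $N$, the truncated identity $Y^{a_0,p_0}_0=\mathbb E[\int_0^{\tau^\star_N}e^{-\delta t}(c_I I^{\alpha^\star,\tilde\alpha}_t+f(S^{\alpha^\star,\tilde\alpha}_t,p^\star_t))dt+\sum_{n=1}^N e^{-\delta\tau^\star_n}g_{p^\star_{\tau^\star_{n-1}},p^\star_{\tau^\star_n}}+e^{-\delta\tau^\star_N}Y^{a_0,p^\star_{\tau^\star_N}}_{\tau^\star_N}]$ holds, and since the last two of its three terms are non-negative, $Y^{a_0,p_0}_0\ge\min(g_{0,1},g_{1,0})\,\mathbb E[\sum_{n=1}^N e^{-\delta\tau^\star_n}]$; letting $N\to\infty$, on $\{\tau^\star_\infty<+\infty\}$ with $\tau^\star_\infty:=\lim_n\tau^\star_n$ the series $\sum_n e^{-\delta\tau^\star_n}$ diverges, contradicting $Y^{a_0,p_0}_0<\infty$, so $\tau^\star_\infty=+\infty$ a.s. Passing to the limit in the truncated identity (monotone convergence for the integral, and $e^{-\delta\tau^\star_N}Y^{a_0,p^\star_{\tau^\star_N}}_{\tau^\star_N}\to0$ because $Y$ is bounded) then gives $Y^{a_0,p_0}_0=J^{\tilde\alpha}(\alpha^\star)=V^{\tilde\alpha}(s_0,i_0;p_0)$. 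A secondary point worth a remark is that here $\tilde\alpha$ is frozen, so the whole argument is run on each stochastic interval on which the attack state $a$ is constant and the driver data are $\mathbb F$-adapted, and the enlargement-of-filtration aspect of the full model plays no role in this statement.
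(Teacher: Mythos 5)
Your proposal is correct and takes essentially the same route as the paper, whose entire proof of this proposition consists of citing the infinite-horizon reflected-BSDE and optimal-switching literature (\cite{hamadene1999infinite,el2010optimal,aazizi2013optimal,aid2014probabilistic,el1997reflected}) and stating that the result follows directly. Your discounted change of variables, iterative/monotone existence scheme, and verification argument with the admissibility check $\tau^\star_n\uparrow\infty$ via the strictly positive switching costs is precisely the standard content of those cited references, written out in detail rather than delegated to them.
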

\begin{Remark}
Proposition \ref{RBSDE} allows us to characterize the solution of \eqref{pb:optprotection} as the unique solution of the system of infinite horizon coupled RBSDE \eqref{relfectedBSDE}. Note that both the obstacle and the Skorokhod condition depend on the solution of the RBSDE $Y^{a,\bar p}$. For this reason, we prefer to solve the problem numerically using a PDE-based approach.
\end{Remark}
\subsection{Verification argument }

\subsubsection{Verification procedure}

 In this section, we formally prove that a \textit{smooth}\footnote{Smooth refers here to the set of function $v(\cdot,a,p):[0,1]\times [0,1]\longrightarrow \mathbb R$ for $(a,p)\in\{0,1\}\times \{0,1\}$ twice continuously differentiable function on the continuation region $\mathcal C_a^p$ and continuous on the switching region $\mathcal S_{p,\bar p}^a$. We denote this space by $C^2(\mathcal C_a^p)\cap C(\mathcal S_{p,\bar p}^a)$.} solution to the variational inequalities \eqref{eq:VPDE} provides a solution to the optimization problem \eqref{pb:optprotection}. This section follows \cite{bouchard2011optimal,belak2017general} extended to one strategic player in a random environment and \cite{basei2022nonzero,aid2021impulse} to adversarial environment adapted to our problem.
 %applying Ito's Formula. Recall that $v^{a,p}$  defined respectively by \eqref{sol:vubar} are $\mathcal C^{2,2}$ solutions to  \eqref{pde:ubar}. 
 Assume that there is a family of smooth functions $\{v(.,.;a,p), \, a \in\{0,1\}, \, p\in\{0,1\}\}$ which solves \eqref{eq:VPDE}.
 We fix the initial data $(s_0,i_0)$ and  the initial regime $p_0$.  The hacker's strategy $\tilde \alpha$ is exogenous: the 
 state of attack at  time $t$ is 
 $a_t = \displaystyle \sum_{n\geq 0}  \mathbf 1_{\tilde\tau_{2n+1-a_0}  \leq t< \tilde\tau_{2n+2-a_0}}$.

% In this case, the optimal strategy $\alpha^\star$ of the cluster owner is given by the following algorithm

% \text{Initialization. }
 
% \[\tau^*_1=\inf\{t>0,\;  v^{}( S^*_t, I^*_t; 1,p_{0})= v^{}(S^*_t, I^*_t; 1,\bar p_{0})+g_{p_{0},\bar p_{0}}\}\]
% and $p^*_1:=
% \overline p_0.$

% Induction. \text{For $n> 1$. }
% \[
%   \tau^*_{n}= \inf\{t>  \tau^*_{n-1} ,\;  v^{ }( S^*_t, I^*_t;1,p_{\tau^*_{n-1}})= v^{}( S^*_t, I^*_t;1,\bar p_{\tau^*_{n-1} })+g_{p_{\tau^*_{n-1}},\bar p_{\tau^*_{n-1} }}\} ,\;
% \]

%  and we set $p^*_n:=
%  \bar p_{\tau^*_{n-1}}.$

 We define $(S^*, I^*, R^*)$ the solution of the SDEs system \eqref{systdynamics} characterized  by the switching  sequence  $\alpha^*:=(\tau^*_n)_{n\geq 0}$ that contains  both the switching times of the hacker (that is $\tilde \alpha$) and the optimal switching times of the cluster owner (that  is $\hat \alpha$) defined by induction as follows.

 \textit{Initialization. } Starting at $a_0$ for the hacker and $p_0$ for the cluster owner, we set
 
\[\tau^*_1=\inf\{t>0,\;  v^{}( S^*_t, I^*_t; a_0,p_{0})= v^{}(S^*_t, I^*_t; a_0,\bar p_{0})+g_{p_{0},\bar p_{0}}\}\wedge \tilde\tau_1.\]

\textit{Induction.} \text{For $n> 1$. }
$$
  \tau^*_{n}= \inf\{t>  \tau^*_{n-1} ,\;  v^{ }( S^*_t, I^*_t;a_{\tau^*_{n-1}} ,p_{\tau^*_{n-1}})= v^{}( S^*_t, I^*_t;a_{\tau^*_{n-1}},\bar p_{\tau^*_{n-1} })+g_{p_{\tau^*_{n-1}},\bar p_{\tau^*_{n-1} }}\} \wedge \tilde\tau(\tau_{n-1}^*),
$$
with $\tilde\tau(\tau_{n-1}^*)=\min_{k\ge1}\{\tilde\tau_k:\ \tilde\tau_k\ge \tau_{n-1}^*\}.$\\

Before proceeding with  the verification theorem,
we recall the continuation and switching regions as defined in \eqref{defs} and \eqref{defc}.
\begin{itemize}
    \item Switching region from $p$ to $\overline p$: \;
   $\mathcal S^{a}_{p,\overline p}:=\{(s,i):\; v(s,i;a,p)=\mathcal M v(s,i; a, p)\};$
    \item Continuation region in $p$: \;
  $  \mathcal C^a_{p}:=\{(s,i):\; v(s,i;a,p)<\mathcal M v(s,i; a, p)\}$
\end{itemize}
where $\mathcal M v(s,i; a, p) := v(s,i; a, \overline p) + g_{p,\overline p}(s,i)$.

\begin{Theorem}[Verification and Construction of Optimal Strategy]\label{thm:verification}
Fix an initial regime $(a_0,p_0)\in\{0,1\}^2$. For each $(a,p)\in\{0,1\}^2$, suppose that there exists a function $v(\cdot,\cdot; a,p)\in C^{2}(\mathcal C^a_p)\cap C(\mathcal S^a_{p,\overline p})$ satisfying the following Quasi-Variational Inequalities for all $(s,i,a,p)\in [0,1]^2\times \{0,1\}^2$:
\begin{equation*}
\max \Biggl\{ 
-\mathcal L^{a,p} v(s,i; a, p) + \delta v(s,i; a, p) - (c_I i + f(s,p)) , \quad 
 v(s,i; a, p) - \mathcal M v(s,i; a, p) 
\Biggr\} = 0,
\end{equation*}
Define the sequence of event times $\tau_n^*$ starting with $\tau_0^*=0$ as follows:
\[
\tau_n^* := \hat\tau_n \wedge \tilde\tau(\tau_{n-1}^*),
\]
where
\begin{itemize}
    \item $\hat\tau_n := \inf\{t > \tau_{n-1}^* : (S_t, I_t) \in \mathcal S^{a^*_{n-1}}_{p^*_{n-1},\overline p^*_{n-1}}\}$ is the defender's planned switching time after the $(n-1)^{\text{th}}$ regime switching.\footnote{For the sake of simplicity and to alleviate the notations, we set $(a^*_{n-1}, p^*_{n-1},\overline p^*_{n-1}):=(a_{\tau_{n-1}^*}, p_{\tau_{n-1}^*},\overline p_{\tau_{n-1}^*})$. }
    \item $\tilde\tau(\tau_{n-1}^*) := \min_{k \ge 1} \{ \tilde\tau_k : \tilde\tau_k > \tau_{n-1}^* \}$ is the next random hacker switching time after the $(n-1)^{\text{th}}$ regime switching.
\end{itemize}

The regimes update as follows:
\begin{itemize}
    \item If $\tau_n^* = \hat\tau_n < \tilde\tau(\tau_{n-1}^*)$, we set $
    p_{\tau_n^*} = \overline p_{\tau_{n-1}^*}, \quad a_{\tau_n^*} = a_{\tau_{n-1}^*}.$
    \item If $\tau_n^* = \tilde\tau(\tau_{n-1}^*) \le \hat\tau_n$, we set $
    a_{\tau_n^*} = \overline a_{\tau_{n-1}^*}, \quad p_{\tau_n^*} = p_{\tau_{n-1}^*}.$
\end{itemize}

Then, the subsequence of times $\hat \alpha = (\tau_n^* \mid \tau_n^* = \hat\tau_n)_n$ is an optimal protection strategy, and $V(s_0,i_0; a_0,p_0) = v(s_0,i_0; a_0,p_0)$.
\end{Theorem}

\begin{proof}
We prove the result by induction on the number of cluster owner interventions $n$.
We define the claim $\mathcal P_n$ as:
\begin{equation*}
\mathcal P_n:\; v(S_0,I_0;a_0,p_0) = \mathbb E\left[ \int_{0}^{\hat\tau_{n}} e^{-\delta s} (c_I I_s+f(S_s,p_s)) ds +\mathfrak S^n  + e^{-\delta \hat\tau_n}v(S_{\hat\tau_n},I_{\hat\tau_n}; a_{\hat\tau_n},p_{\hat\tau_n}) \right], 
\end{equation*}
with \[\mathfrak S^n:=\sum_{j=1}^n e^{-\delta \hat\tau_j} g_{p_{\hat\tau_{j-1}},p_{\hat\tau_{j}}}(S_{ \hat\tau_j },I_{ \hat\tau_j }) \mathbf 1_{\tau_j^* = \hat\tau_j}.\]

\noindent\textit{ Base Case ($n=1$).}
We consider the stopping time $\tau^*_1 = \hat\tau_1 \wedge \tilde\tau_1$. On the interval $[0, \tau^*_1)$, the parameters $(a_0, p_0)$ are constant. We apply It\^o's formula to the process $Y_t = e^{-\delta t}v(S_t, I_t; a_0, p_0)$. 
Since $v$ satisfies the constraint $-\mathcal L^{a_0,p_0} v + \delta v = c_I i + f$ in the continuation region $\mathcal C^{a_0}_{p_0}$, we obtain:
\begin{align*}
e^{-\delta \tau^*_1}v(S_{\tau^*_1},I_{\tau^*_1}; a_0, p_0) &= v(S_0,I_0; a_0, p_0) - \int_{0}^{\tau^*_1} e^{-\delta s}(c_I I_s + f(S_s, p_0)) ds \\
&\quad + \int_{0}^{\tau^*_1} e^{-\delta s} [\partial_I - \partial_S]v(S_s, I_s; a_0, p_0) \sigma S_s I_s dW_s.
\end{align*}

Taking expectations we get:
\begin{equation}\label{eq:ito_base}
v(S_0,I_0; a_0, p_0) = \mathbb E\left[ \int_{0}^{\tau^*_1} e^{-\delta s}(c_I I_s + f(S_s, p_0)) ds + e^{-\delta \tau^*_1}v(S_{\tau^*_1},I_{\tau^*_1}; a_0, p_0) \right].
\end{equation}

To prove $\mathcal{P}_1$, we decompose the terminal term on the disjoint events $\{\hat\tau_1 < \tilde\tau_1\}$ (cluster owner acts first) and $\{\tilde\tau_1 \le \hat\tau_1\}$ (hacker acts first).

\begin{enumerate}
    \item[(i)] On the set $\{\hat\tau_1 < \tilde\tau_1\}$:
    We have $\tau^*_1 = \hat\tau_1$. By definition of the switching region $\mathcal S^{a_0}_{p_0,\overline p_0}$, the cluster owner switches $p$ optimally:
    \[
    v(S_{\hat\tau_1}, I_{\hat\tau_1}; a_0, p_0) = v(S_{\hat\tau_1}, I_{\hat\tau_1}; a_0, \overline{p}_0) + g_{p_0, \overline{p}_0} (S_{ \hat\tau_1 },I_{ \hat\tau_1 }).
    \]

    \item[(ii)] On the set $\{\tilde\tau_1 \le \hat\tau_1\}$:
    We have $\tau^*_1 = \tilde\tau_1$. At this instant, the hacker switches $a_0$ to $\overline{a}_0$. This switch is exogenous and not strategic for the cluster owner since the switching of the environment (hacker strategy) appears only in the SIRS system. Note that we restrict to the case where the hacker switches only once before $\hat\tau_1$. The proof would be similar if we assume that there exists $k$ such that $0<\tilde \tau_1<\dots<\tilde\tau_k<\hat\tau_1$. From It\^o's formula between each change of strategy of the hacker until $\hat\tau_1$ is reached together with the continuity of the value function, the integral terms accumulate into a single integral between 0 and $\hat\tau_1$.
\end{enumerate}

Combining (i) and (ii) into Equation \eqref{eq:ito_base} we get:
\begin{align*}
    v(S_0, I_0;a_0,p_0) &= \mathbb E\left[ \int_{0}^{\hat\tau_1} e^{-\delta s}(c_I I_s + f(S_s,p_0)) ds \right] \\
    &\quad + \mathbb E\left[ e^{-\delta \hat\tau_1} \left( v(S_{\hat\tau_1}, I_{\hat\tau_1}; a_{\hat\tau_1}, \overline{p}_0) + g_{p_0, \overline{p}_0} \right) \mathbf{1}_{\{\hat\tau_1 < \tilde\tau_1\}} \right] \\
    &\quad + \mathbb E\left[ e^{-\delta \tilde\tau_1} v(S_{\tilde\tau_1}, I_{\tilde\tau_1}; \overline{a}_0, p_0) \mathbf{1}_{\{\tilde\tau_1 \le \hat\tau_1\}} \right]\\
    &= \mathbb E[ \int_{0}^{\hat\tau_1} e^{-\delta s}(c_I I_s + f(S_s,p_0)) ds+  e^{-\delta \hat\tau_1} ( v(S_{\hat\tau_1}, I_{\hat\tau_1}; a_{\hat\tau_1},p_{\hat\tau_1}) + g_{p_0, \overline{p}_0} (S_{ \hat\tau_1 },I_{ \hat\tau_1 })  \mathbf{1}_{\{\hat\tau_1=\tau^*_1 \}})]
\end{align*}
We thus obtain the base case $\mathcal{P}_1$.

\noindent\textit{Induction.}
We assume that $\mathcal P_j$ holds for any $j\leq n$ for a fixed $n\geq 1$. We need to prove $\mathcal P_{n+1}$. 
We analyze the system's evolution over the interval $(\hat\tau_n, \hat\tau_{n+1}]$. In this interval, the owner's control $p_s$ remains constant at $p_{\hat\tau_n}$. Following the same lines that the base case, we get
\begin{align*}
e^{-\delta \hat\tau_n}v(S_{\hat\tau_n}, I_{\hat\tau_n}; a_{\hat\tau_n}, p_{\hat\tau_n}) = \mathbb{E}\Bigg[ &\int_{\hat\tau_n}^{\hat\tau_{n+1}} e^{-\delta s} \left(c_I I_s + f(S_s, p_{\hat\tau_n})\right) ds \\
&+ e^{-\delta \hat\tau_{n+1} } \left( g_{p_{\hat\tau_n}, p_{\hat\tau_{n+1}}}(S_{\hat\tau_n},I_{\hat\tau_n})\mathbf 1_{\hat\tau_n=\tau^*_n} + v(S_{\hat\tau_{n+1}}, I_{\hat\tau_{n+1}}; a_{\hat\tau_{n+1}}, p_{\hat\tau_{n+1}}) \right) \Bigg| \mathcal{F}_{\hat\tau_n} \Bigg].
\end{align*}
Using the induction assumption together with the law of iterated conditional expectations, we get by summation of the integrals

\begin{align*}
v(S_0, I_0; a_0, p_0) = \mathbb{E}\Bigg[ &\int_{0}^{\hat\tau_n} e^{-\delta s} \left(c_I I_s + f(S_s, p_s)\right) ds + \sum_{k=1}^n e^{-\delta \hat\tau_k} g_{p_{\hat\tau_{k-1}}, p_{\hat\tau_k}}(S_{\hat\tau_k},I_{\hat\tau_k})\mathbf 1_{\hat\tau_k=\tau^*_k} \\
&+ \int_{\hat\tau_n}^{\hat\tau_{n+1}} e^{-\delta s} \left(c_I I_s + f(S_s, p_{\hat\tau_n})\right) ds + e^{-\delta \hat\tau_{n+1}} g_{p_{\hat\tau_n}, p_{\hat\tau_{n+1}}}(S_{\hat\tau_n},I_{\hat\tau_n})\mathbf 1_{\hat\tau_n=\tau^*_n} \\
&+ e^{-\delta \hat\tau_{n+1}} v(S_{\hat\tau_{n+1}}, I_{\hat\tau_{n+1}}; a_{\hat\tau_{n+1}}, p_{\hat\tau_{n+1}}) \Bigg],
\end{align*}
and so $\mathcal P_{n+1}$ is satisfied.\vspace{0.3em}

\textit{Conclusion.} By taking $n\to+\infty$ since $v$ is continuous on a bounded domain with $\hat\tau_n\to+\infty$, we get 

\[v(S_0,I_0;a_0,p_0)=\mathbb E[ \int_{0}^{\infty}( c_I I_s+f(S_s,p^*_s)) ds + \sum_{j=1}^\infty e^{-\delta \hat\tau_j} g_{p_{\hat\tau_{j-1}},p_{\hat\tau_{j}}}]. \]\qed
\end{proof}

\begin{Remark}
The value function is regime-dependent: at any time $t$, the relevant value
is $v(S^*_t,I^*_t;a_t,p_t)$, where $a_t$ and $p_t$ are the currently active regimes of attack and protection respectively. Note that $\tau^*_n$ denotes the $n^{\text{th}}$ switching time of the system while $\hat\tau_n$ denotes the first time that the cluster owner switches starting from the last $(n-1)^{\text{th}}$ switching time induced by either the defender or the hacker. 
\end{Remark}

\section{Numerical studies}

\subsection{Numerical approximation by Deep Galerkin Method}
This part is dedicated to the  development of  a  numerical algorithm to determine the optimal strategy of a cluster owner. It relies on   the Deep Galerkin method to solve numerically the PDE \eqref{eq:VPDE}. The main idea behind solving PDEs using the Deep Galerkin Method (DGM) described in the work of Sirignano and Spiliopoulos \cite{DGM} is to represent the unknown function of interest using a deep neural network. Noting that the function must satisfy a known PDE, the network is trained by minimizing losses related to the differential operator acting on the function along with any initial, terminal and/or boundary conditions the solution must satisfy. The training data for the neural network consists of different possible inputs to the function and is obtained by sampling randomly from the region on which the PDE is defined. One of the key features of this approach is the fact that, unlike other commonly used numerical approaches such as finite difference methods, it is mesh-free. Simulations indicate that the DGM may not suffer (as much as other numerical methods) from the curse of dimensionality associated with high-dimensional PDEs and PDE systems. A discussion of DGM and its applications can be found in Al-Aradi et al. (2018). On a related note, previous  works, e.g.  Hutzenthaler et al. (2019) and Huré et al. \cite{hure2020deep} emphasize  that deep learning-based algorithms overcome the curse of dimensionality in the numerical approximation of solutions for nonlinear PDEs.\\
In this section we fix the parameter $a$ and $p$ and we consider the following PDE:

\begin{equation}\label{DGM}
 - \delta v(s,i) + \mathcal L^{a,p}v (s,i) + c_Ii+ f(s, p)=0,\; \text{on }  \mathcal D.
 \end{equation}
where \[\mathcal L^{a,p} v(s,i)= (\rho(1-s-i) -s(p \kappa +a \nu+\beta i)) \partial_s v +( a \nu s -\gamma i + \beta si) \partial_i v +\frac{\sigma^2}2 s^2i^2( \partial_{ss}  v+ \partial_{ii}  v -2 \partial_{is} v ). \]
The DGM algorithm approximates $v(s,i)$ with a deep neural network $\hat v(s,i;\theta)$ where
$\theta\in\mathbb R^k$ are the neural network’s parameters. Note that the differential operators in $\mathcal L\hat v(s,i;\theta)$ 
can be calculated analytically. Construct the objective function:
\[\mathcal J( \hat v)=\left\|- \delta \hat v(s,i;\theta) + \mathcal L^{a,p}\hat v(s,i;\theta) + c_Ii+ f(s, p)\right\|^2_{\mathcal D,\nu_1}.\]
Notice that $\left\lVert\hat v(y)\right\rVert^2_{\mathcal Y,\nu}=\displaystyle\int_\mathcal Y |\hat v(y)|^2 \nu(y) dy$ where $\nu(y)$ is a positive probability density on $y\in\mathcal Y$. $\mathcal J(\hat v)$ measures how
well the function $\hat v(s,i;\theta)$ satisfies the PDE differential operator and initial condition. If $\mathcal J(\hat v) = 0$, then $\hat v(s,i;\theta)$ is a solution to the PDE \eqref{DGM}.\\
The goal is to find a set of parameters $\theta$ such that the function $\hat v(s,i;\theta)$ minimizes the error $
\mathcal J(\hat v)$. If the error $
\mathcal J(\hat v)$ is small, then $\hat v(s,i;\theta)$ will closely satisfy the PDE differential operator and initial condition. Therefore, a $\theta$ which minimizes $
\mathcal J(\hat v(.;\theta))$ produces a reduced-form model $\hat v(s,i;\theta)$ which approximates the PDE solution $v(s,i)$.
To estimate  $\theta$, one can minimize $
\mathcal J(\hat v)$ using stochastic gradient descent on a sequence  space points drawn at random from $\mathcal D$. This avoids ever forming a mesh.\\
The DGM algorithm is:
\begin{enumerate}
\item Generate random points $(s_n, i_n)$ from $\mathcal D$ and $(x_n, y_n)$ from $\{0\}\times [0,1]$ according to respective probability densities $\nu_1$ and $\nu_2$. 
\item Calculate the squared error $G(\theta_n, r_n)$ at the randomly sampled points $r_n=\{(s_n, i_n),(x_n, y_n)\}$ where
\begin{equation*}
    G(\theta_n, r_n)=\Big(- \delta \hat v(s_n,i_n;\theta) + \mathcal L^{a,p}\hat u(s_n,i_n;\theta) + c_Ii_n+ f(s_n, p)\Big)^2+\Big(\hat v(0,y_n;\theta)-\frac{c_I y_n}{\delta+\gamma}\Big)^2.
\end{equation*}
\item Take a descent step at the random point $r_n$:
$$\theta_n=\theta_{n+1}-\alpha_n \nabla_\theta G(\theta_n,r_n).$$
\item  Repeat until convergence criterion is satisfied.
\end{enumerate}

\subsection{Optimal protection under different attack scenarios}

This section illustrates Theorem \ref{thm:verification}
for the two following attacks scenarios 
 
 \begin{enumerate}
     \item Constant attack from the hacker. We assume that the hacker constantly attacks the cluster, that is $\tilde{\tau}_n=\infty$ for any $n \geq 1$ and $a_0=1$. 
     \item Random attack sequence from the hacker. Let $N$ be a Poisson process modeling the number of switches of the cyber-attack level. Then, $\tilde \tau_k$ is the $k$th event time of $N$ such that $a_t=a_0$ for $t\in[\tilde\tau_{2k},\tilde\tau_{2k+1}) $ and $a_t=\overline{a_0}$ for $t\in[\tilde\tau_{2k+1},\tilde\tau_{2k+2})$.
 \end{enumerate}
 We are studying the evolution of the SIRS systems under optimal attack on a time period of $T=30$ days (one month). We discretize the DGM algorithm with a time step
$h=0.125$ corresponding to 3 hours in a day. In each scenario, the contagion rate is 
$\beta=0.04$, the recovery rate is $\gamma=0.02$, the replacement rate is
$\rho=0.002$, the intensity of the attack is $\nu=0.05$, the volatility of the SIRS system is $\sigma=0.2$, the actualization parameter is 
$\delta=0.2$.  We start with only susceptible and no corrupted devices, $S_0=1,I_0=0$. In both attack scenarios, we first analyse the (pathwise) optimal protection strategy on a given trajectory of the $S$ and $I$ processes. We then provide statistics, based on Monte Carlo realizations, to assess the robustness of the optimal control strategy with respect to variations of the infection parameters $\beta$ (contagion rate) and $\nu$ (force of the attack). We choose to test the impact of these parameters, which characterize the infection itself, since the cluster owner is subject to these quantities and cannot control them.
%in the contagion parameter $\beta$ and the intensity of the attack $\nu$,
Two different metrics are used to quantify the effectiveness  of the protection strategy. The first natural metric  is the total infection burden $\int_0^T I_t dt$, which represents the total number  of devices %(techniquement c'est la porportion, il faudrait multiplier par le nb de devices, mais on s'en fout je pense) 
that have been corrupted during the period. The second one  is the value of the peak of corrupted devices, that is $\underset{0\leq t\leq T}{\max} I_t$. This peak is an important indicator of the saturation risk,  since the costs are not necessarily a linear function of  the number of infected, as highlighted in \cite{hillairet2021propagation}.  For example, if the proportion of effective devices falls below a given threshold, the enterprise’s activity may be entirely blocked, whereas partial (possibly degraded) operation can still be maintained as long as the proportion of corrupted devices does not exceed this saturation level.

\subsubsection{Scenario 1: constant attack}
We illustrate Theorem \ref{thm:verification} when the hacker attack stays in the state $a=1$, corresponding to the first scenario above. %We start with only susceptible and no corrupted devices, $S_0=1,I_0=0$. 
We choose the efficiency of the protection $\kappa= 0.03$, the marginal cost of protection is $c_V= 0.05 $ while the marginal cost of infected device is $
c_I=0.01$. The switching costs to reinforce or relax the protection, that is going from $p=0$ to $p=1$ or conversely are proportional to the value function in the current state and given by $g_{01}=0.001v(s,i,1,0)$ and $g_{10}=0.001v(s,i,1,1)$ respectively. Figure \ref{figscenario1} represents one path of $S,I$ without protection and with optimal protection  for  a fixed $\omega$, using   the DGM algorithm. Starting with $p=0$, we observe that the cluster owner let the attack spreading a little bit before reinforcing the system's protection until time $\hat\tau_1=9.289=$ 9 days and 7 hours (first green vertical line). It is explained by the switching cost to protect the system which is too high comparing to the cost of the infection at the beginning. Then, the cluster owner reinforces the protection until time $\hat\tau_2=22,374 =$22 days and 9 hours (second green vertical line). During this period, we observe that the cluster owner effectively manages the number of infected devices (represented by the yellow curve) much more efficiently than in scenarios without any protection (shown by the blue curve). As a result, the number of susceptible devices that have not yet been compromised by the attack (illustrated by the red curve) declines at a slower rate compared to the no-protection strategy (depicted by the pink curve). However, when the cost of maintaining protection becomes prohibitively high at time $\hat\tau_2=22.374$, the cluster owner decides to reduce the protection level to 
$p=0$. This optimal strategy has successfully kept the number of corrupted devices at a lower level after one month, in stark contrast to the outcomes observed under the no-protection approach.
\begin{figure}[htbp]
\centering
\includegraphics[width=0.8\textwidth]{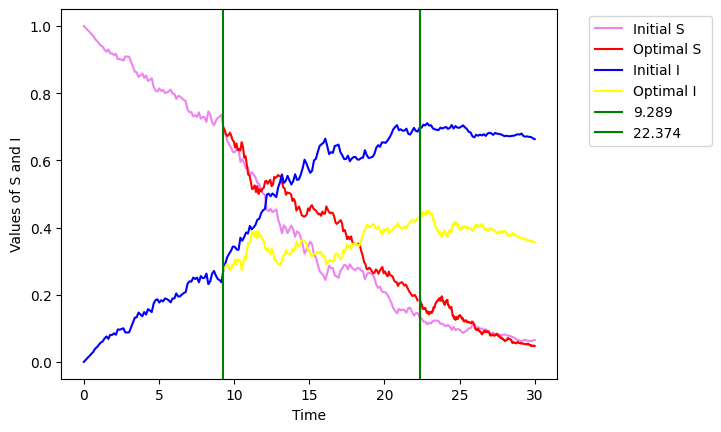}
    \caption{Optimal trajectory of $S$ and $I$ with protection and switching v.s. no protection strategy. Scenario 1.}
   \label{figscenario1}
\end{figure}

 Figures \ref{variationsbeta1} and \ref{variationsnu1}  illustrate the infection reduction effectiveness by providing the percentage of reduction of the two metrics (Peak of the infection and Total Burden) with respect to $\beta$ and $\nu$ respectively.
%To assess the robustness of the optimal control strategy with respect to variations in the contagion parameter $\beta$ and the intensity of the attack $\nu$,.
More precisely,  a Monte Carlo simulation procedure  is implemented within the stochastic SIRS framework. In both cases,  $200$ independent stochastic trajectories of the epidemic dynamics  are generated under two strategies: an uncontrolled baseline and an optimally controlled regime. For each trajectory, the  two  severity  indicators, namely the peak infection level $\underset{0\leq t\leq T}{\max} I_t$ and the total infection burden $\int_0^T I_t dt$, are recorded. For each value of the parameter being varied, the average percentage reductions in peak infection and total burden achieved by the optimal control is  then computed relative to the uncontrolled case.

\paragraph{Performance of the optimal protection with respect to the contagion rate.} Figure \ref{variationsbeta1} analyzes   the sensitivity of the control performance to changes in $\beta$, considering values in the range $\beta \in [0.01, 0.1]$, while keeping the attack intensity parameter $\nu$ fixed at $0.05$. It reports the average reductions  as functions of the contagion parameter $\beta$.
The reduction in the average number of infected individuals over the period $[0,T]$ increases monotonically with $\beta$, from $33.025\%$ to $36.11\%$. This metric accounts for the entire epidemic trajectory and therefore captures the cumulative effect of the protection strategy. As contagion intensifies, the sustained action of the control yields larger relative gains in terms of total infections avoided, resulting in a higher percentage reduction for larger values of $\beta$.
Concerning the peak  of infection, 
the reduction  varies from  $37.55\%$ to $38.11\%$  with a slight increase for small values of $\beta$ and then a slight decrease as $\beta$ becomes larger. This does not reflect a loss of control efficiency, but rather the intrinsic difficulty of acting on the epidemic peak when transmission is faster. For low values of $\beta$, the infection spreads more slowly, giving the control policy enough time to react and significantly reduce the peak level. When $\beta$ increases, the epidemic spreads more rapidly, and due to latency effects in the impact of the protection, the peak occurs earlier and at a relatively higher level before the protection fully takes effect. Nevertheless, the peak reduction remains within a relatively stable range across all values of $\beta$, indicating that peak mitigation remains effective even in more contagious regimes.

\paragraph{Performance of the optimal protection with respect to the attack rate.} 
Figure \ref{variationsnu1}  reports the average reductions  as functions of  the intensity of the attack $\nu$ when $\nu\in [0.01, 0.1]$ while holding $\beta$ at its reference value ($\beta=0.04$). We observe higher sensitivity than for the contagion parameter $\beta$. Indeed, the reduction in the average number of infected individuals over the period $[0,T]$ is increasing concave  in  $\nu$, from  a very low value when the attack stream is low ($\nu= 0.01$) and reaching a plateau at $32\%$ for intense attack value ($\nu=0.07 $). The larger the attack level, the higher the percentage  reduction of total burden.
Concerning the infection peak, the reduction is also concave, exhibiting a steady increase from $5.34\%$ (for $\nu = 0.01$) to $47.46\%$ (for $\nu = 0.062$), followed by a slight decrease to $41.4\%$ (for $\nu = 0.1$). Similarly to the sensitivity with respect to $\beta$, this decrease may be explained by the fact that, under high attack intensity, the peak is reached earlier and at a relatively higher level, before the protection fully takes effect.

\begin{figure}[htbp]
\centering
\includegraphics[width=0.7\textwidth]{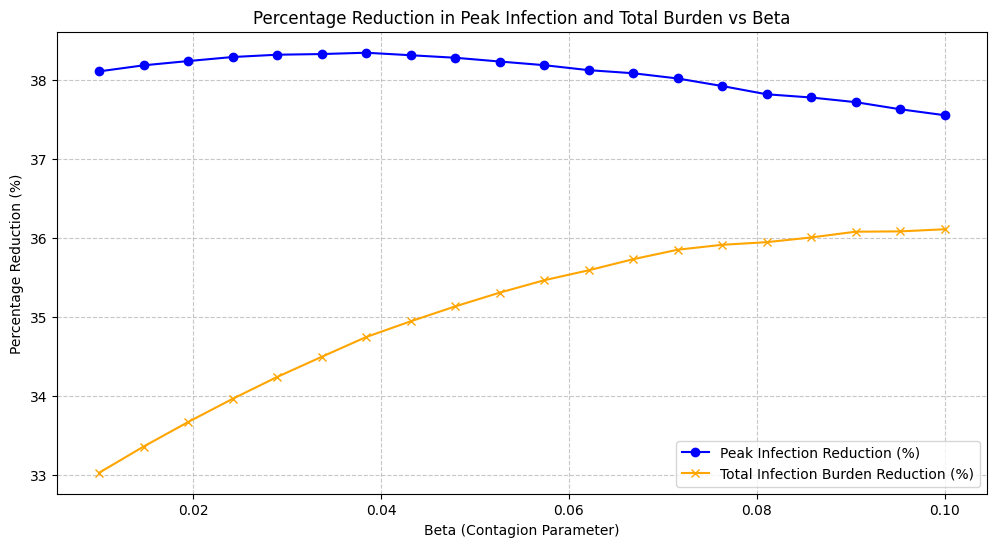}
\caption{Impact of contagion parameter $\beta$ on control performance. Scenario 1.}
\label{variationsbeta1}
\end{figure}

\begin{figure}[htbp]
\centering
\includegraphics[width=0.7\textwidth]{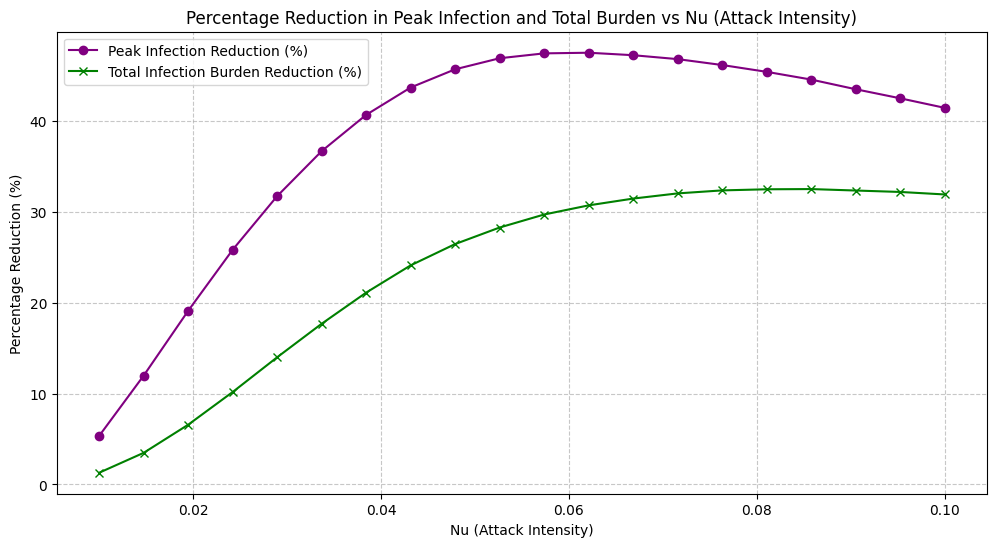}
\caption{Impact of attack parameter $\nu$ on control performance. Scenario 1.}
\label{variationsnu1}
\end{figure}

\subsubsection{Scenario 2: exogenous Poisson attacks}
We now illustrate Theorem \ref{thm:verification} when the attack engaging/disengaging times occur with a Poisson process of intensity $\lambda=0.1$, starting with an attack strategy at time $0$ in the state $a=1$ and no protection $p=0$. We choose the efficiency of the protection $\kappa= 0.02$, the marginal cost of protection is $c_V= 0.04 $ while the marginal cost of infected device is $c_I=0.01$. The switching costs to reinforces or relax the protection, that is going from $p=0$ to $p=1$ or conversely are $g_{01}=0.01 v(s,i,a,0)$ while $ g_{10}=0.001 v(s,i,a,1)$ for any $a\in \{0,1\}$. It means that there is a factor 10 of switching from no protection to protection strategy compared with the cost of the converse switch.\newline

We perform similar analysis as for Scenario 1. Figure \ref{figscenario2} represents one path of $S$, $I$ in Scenario 2 without protection and with optimal protection
for a fixed $\omega$. Similarly to Scenario 1,
we observe  that the cluster owner let the attack spreading a little bit before enhancing protection systems until time $\hat\tau_1=7.155=$ 7 days and 4 hours (first pink dotted vertical line). It is again explained by the switching cost to protect the system which is too high comparing to the cost of the infection at the beginning. Then, at time $\hat\tau_1=7.155$ the cluster owner enforces the protection. Randomly, at time $\tilde\tau_1=13.2=$ 13 days and 5 hours, the hacker disengages the attack (first blue dotted vertical line). After this time, the system stays in a state where there is no attack and the cluster owner is still protecting the system to contain even more efficiently the spread of the attack among the network. Then, the cluster owner disengages the protection system at time $\hat\tau_2=19.105=$ 19 days and 2 hours, before the next random attack at $\tilde\tau_2=22.35=$ 22 days and 8 hours (last blue dotted line). We see that despite the last attack at time  $\tilde\tau_2= 22.35$, the cluster owner does not reengage the protection system. It is explained by the successful management of the switching between protection and no protection strategy along time under the random attacks of the hacker, so that the attack and its spread is efficiently monitored. We observe that the final number of corrupted devices (red curve) is significantly lower (around 60\%) than without any protection strategy (brown curve).
\begin{figure}[htbp]
\centering
\includegraphics[width=0.7\textwidth]{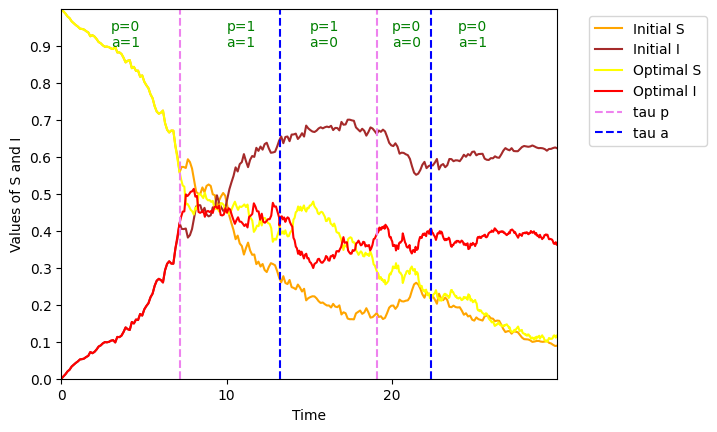}
    \caption{Optimal trajectory of $S$ and $I$ with protection and switching v.s. no protection strategy. Scenario 2.}
   \label{figscenario2}
\end{figure}

\newpage
 Figure \ref{variationsbeta2} (resp. Figure \ref{variationsnu2})  illustrates the infection reduction effectiveness by providing  the percentage of reduction of the two metrics (Peak of the infection and Total Burden) as functions of the contagion  parameter $\beta$ (resp. of the attack intensity parameter $\nu$). Similar patterns to those observed in Scenario 1 arise. Nevertheless, the functions are less regular and the protection is less efficient overall, which can be attributed to the randomness of the attack environment. For instance, with respect to the contagion parameter $\beta$, the percentage reduction range is $25-31\%$ for the infection peak, compared to about $38\%$ in Scenario 1, and  $12-24\%$, compared to $33-36\%$ for the total burden. Regarding the attack intensity parameter $\nu$, the protection in Scenario 2 is also less effective than in Scenario 1, but it increases in an almost linear way (rather than concave) and reaches approximately the same level of efficiency as Scenario 1 for large values of $\nu$.\newline

\begin{figure}[htbp]
\centering
\includegraphics[width=0.6\textwidth]{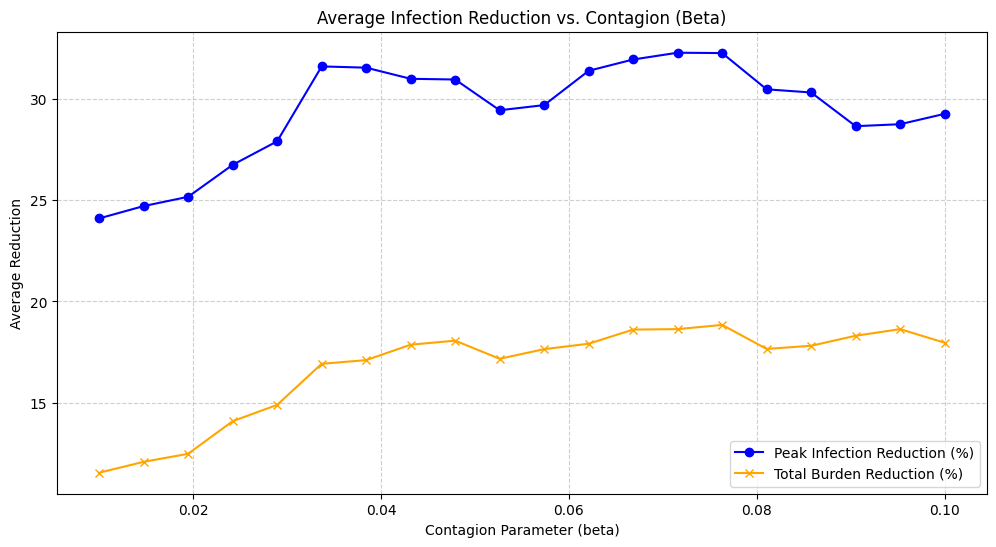}
\caption{Impact of contagion parameter $\beta$ on control performance. Scenario 2.}
\label{variationsbeta2}
\end{figure}

\begin{figure}[htbp]
\centering
\includegraphics[width=0.6\textwidth]{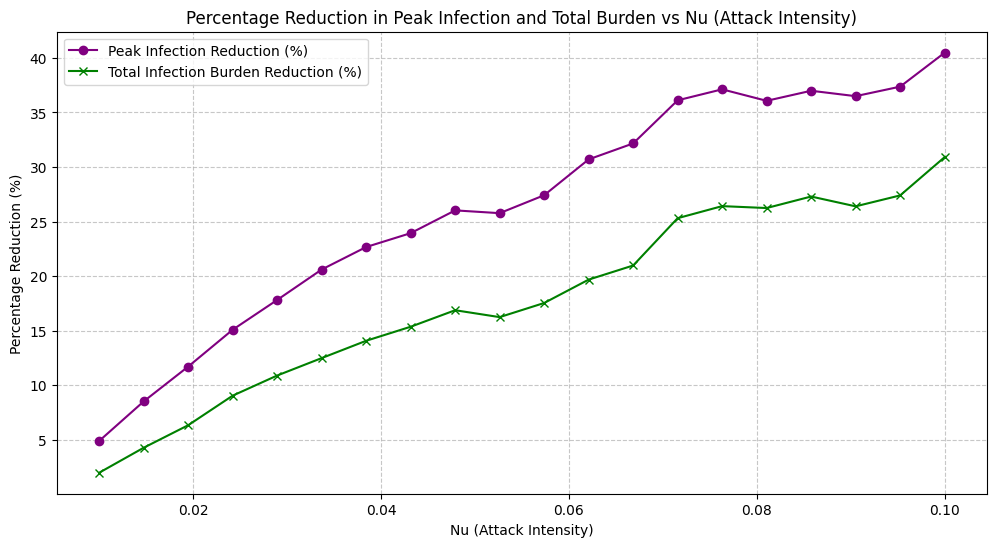}
\caption{Impact of attack parameter $\nu$ on control performance. Scenario 2.}
\label{variationsnu2}
\end{figure}

Overall, we observe that in the two scenarios, the cluster owner contains the corrupted devices efficiently at a final level around 30\% instead of 60\% (factor 0.5). We also observe a kind of robustness in the terminal value of $I$ when the system is optimally protected independently of the scenario. It suggests for future study to investigate more the behavior of the hacker, by finding a Nash equilibrium for the system hacker-cluster owner and the optimal attack/protection strategies chosen along the time period.

\section*{Conclusion}
Inspired by epidemiological models, the paper formulates a problem of cyber risk management through a two-dimensional stochastic switching control problem.   The optimal
protective measures are characterized as viscosity solutions to a system
of coupled variational inequalities, which are numerically approximated using  the Deep Galerkin algorithm. 
This work paves the way for the development of stochastic control methods in cyber risk management, a field that has so far received limited attention in the literature. 
In particular, this paper is a first milestone towards a more realistic game framework, in which the hacker is also strategic and whose optimal strategy is anticipated by the cluster owner.  Typically, the hacker aims to optimize the following trade-off: his value function is an increasing functions of the number of infected in the cluster, while  at the same time launching attacks is costly for him. The resolution of this  game framework is work in progress.

  \small
  \bibliographystyle{acm}

\section{Statements and Declarations}
 This work benefit from the support of the France-Berkeley Fund 2023,  of the ANR project DREAMeS (ANR-21-CE46-0002) and ANR ReLISCoP (ANR-21-CE40-0001). The authors have no relevant financial or non-financial interests to disclose. All authors listed contributed equally to this work. 
 \end{document}